\documentclass[preprint,12pt]{elsarticle}
\usepackage{amsmath,amssymb}
\usepackage{algorithm}
\usepackage{algorithmic}
\usepackage{lineno,hyperref}
\usepackage{graphicx}
\usepackage{multirow}
\usepackage{soul}
\usepackage{epstopdf}
\usepackage{dsfont}
\usepackage{bm}
\usepackage{color}

\biboptions{numbers,sort&compress}

\newtheorem{thm}{Theorem}[section]
\newtheorem{prop}{Proposition}[section]

\makeatletter
\newtheorem{theorem}{Theorem}[section]
\newtheorem{lemma}[theorem]{Lemma}

\newtheorem{remark}{Remark}[section]

\newtheorem{proposition}[theorem]{Proposition}
\newenvironment{proof}[1][Proof]{\noindent\textbf{#1.} }{\hfill $\Box$}
\numberwithin{equation}{section}

\allowdisplaybreaks[4]
\begin{document}

\begin{frontmatter}

\title{Existence and Spatial Decay of Forced Waves for the Fisher-KPP Equation
with a Degenerate Shifting Environment}
\author[DANWEI1,DANWEI2]{Zhibao Tang}\ead{tangzhibao1022@163.com}
\author[DANWEI1]{Shi-Liang Wu}\ead{slwu@xidian.edu.cn}
\author[DANWEI2]{Yaping Wu\corref{cor}}\ead{yaping\_wu@hotmail.com}
\cortext[cor]{Corresponding author.}
\address[DANWEI1]{School of Mathematics and Statistics, Xidian University, Xi'an, Shaanxi 710071, P.R. China}
\address[DANWEI2]{School of Mathematical Sciences, Capital Normal University,
Beijing 100048, China}

\begin{abstract}
This paper is concerned with the existence, the multiplicity and the precise spatial decay of forced waves to the following
heterogeneous Fisher-KPP equation
with a degenerate shifting environment
$$
u_t=u_{xx}+u\big{(}a(x-ct)-u\big{)},~~t>0,~x\in \mathbb{R},
$$
where $c>0$ is a shifting speed of the resource and $a(z)$ can be any given positive function satisfying $a(-\infty)=\alpha>0=a(+\infty),~0<a(z)\leq\alpha,~z\in\mathbb{R}$ and $a'(z)\le 0$ for $z\gg 1$. By applying more general ODE asymptotic theories and  the detailed asymptotic analysis to all the local  positive solutions $\psi(z)$ near $z=+\infty$, which satisfy the nonlinear differential  equation  $\psi''(z)+c\psi'(z)+\psi(z)(a(z)-\psi(z))=0$ for $z\ge z_0$ and $\psi(+\infty)=0$, it can be proved  that there always exist a family of local positive solutions decaying exponentially at $z=+\infty$. We also prove that if
$a(z)$ decays to zero fast enough near $z\to+\infty$  such that $\int^{+\infty}_{z_0}\widetilde{\sigma}_2(z){\rm d}z =+\infty$ with $\widetilde{\sigma}_2(z)=\exp\big(\int^z_{z_0}\frac{-c+\sqrt{c^2-4a(s)}}{2}{\rm d}s\big)$, then there exist no local positive solutions decaying non-exponentially at $z=+\infty$;
 however if $a(z)$ decays slowly enough such that  $\int^{+\infty}_{z_0}\widetilde{\sigma}_2(z){\rm d}z<+\infty$, then there exist two  types of local positive solutions decaying non-exponentially at $z=+\infty$. The precise decay rates of all the local positive solutions are also obtained.

By virtue of the precise decaying estimates  and the ordering of all forced  waves, further by  constructing appropriate pairs of sub-/super solutions and by applying variational method or sliding techniques, we prove that
the above heterogeneous  Fisher-KPP equation admits precisely one or infinitely many or several families of forced waves or no forced waves  depending on the  range of $c$ and
the decaying rate  of $a(z)$ at $z=+\infty$.  Our results also indicate that for any given degenerate $a(z)$ and any $c>0$ the existence/nonexistence of forced waves decaying non-exponentially at $z=+\infty$ are solely determined by  the convergence/divergence of  the integral $\int^{+\infty}_{z_0}\widetilde{\sigma}_2(z){\rm d}z$. Precisely speaking,  we prove that there exists no forced wave decaying exponentially when $c\ge 2\sqrt{\alpha}$; and  there exists a  forced wave solution decaying exponentially  for each $c\in (0,2\sqrt{\alpha})$, which is also the unique  or the minimal forced wave.   For the case  $ \int^{+\infty}_{z_0}\widetilde{\sigma}_2(z){\rm d}z<+\infty$, we prove that for any $c>0$ there exist infinitely many forced waves decaying non-exponentially and only the  maximal forced wave is not in $L^1([z_0,+\infty))$. We also obtained the precise decaying rates of all the forced waves at $z=+\infty$.
\noindent
\noindent

\medskip

\end{abstract}

\begin{keyword}
Shifting environment; Fisher type equation; traveling waves; existence; spatial decay

MSC:~35C07; 35B40; 35K57; 92D25
\end{keyword}

\end{frontmatter}


\section{Introduction}
\label{}
For spatially diffusive population models in
the whole space, traveling  wave solutions are the simplest entire solutions which can describe some typical  wave phenomena  or the asymptotic propagating behavior of spatial segregation of population.
One species population model with a shifting environment can be described by
\begin{equation}\label{origequa1}
u_t=u_{xx}+f(x-ct,u),\quad t>0,~x\in \mathbb{R},
\end{equation}
where $c\in\mathbb{R}$ is the shifting speed of the environment.
This type of equations usually model
the population segregation in a shifting media.
The dynamics of solutions to (\ref{origequa1}), including
the longtime behavior and asymptotic speeds of solutions, the existence and stability of forced waves,
were deeply studied in recent years,
 see \cite{[BN],[BR2],[F1],[F],[LY],[HT],[VHH],[YCW]} and the
references therein.
The effect of climate change and the worsening
of environment resulting from industrialization on population dynamics,
such as extinction, persistence, population migration and some new propagating phenomena
are interesting in both math and application \cite{[AB],[BDNZ],[BL],[HL],[CC],[PL],[LBSW],[TCD]}.
In application, if $u(t,x)$ represents the infected population density, the existence/non-existence of traveling waves to (\ref{origequa1})
can partly answer the epidemiological question about whether the infection can keep pace with the host expansion
\cite{[CTW],[FLW]}.

The model (\ref{origequa1}) with a shifting environment was proposed by Berestycki et al. \cite{[BDNZ]} to
study the impact of climate shift on the dynamics of a biological species.
In \cite{[BDNZ]},
a typical case of $f(z,u)$ can be in the form $f(z,u)=ug(z,u)$ with $g(z,u)<0$ for $|z|\geq L\gg1$, such as
\begin{equation*}
g(z,u)=
\begin{cases}
-\widetilde{r},~~~~~~~~~~~x<0~{\rm and}~x>L,\\
r(1-\frac{u}{K}),~~~~0\leq x\leq L,
\end{cases}
\end{equation*}
for given positive parameters $\widetilde{r},~r,~K$ and $L$,
which means that a favorable
habitat is surrounded by unfavorable ones.
The authors proved that the existence/non-existence of traveling pulse solutions and the asymptotic behavior
of solutions to the Cauchy problem (\ref{origequa1}) are determined by the sign
of the generalized principal eigenvalue $\lambda_1$, which is
defined by
\begin{equation}\label{mz}\lambda_1:=\sup\big{\{}\lambda|\exists ~\phi\in C^2(\mathbb{R}),~\phi(z)>0~{\rm and}~\phi''+c\phi'+f_u(z,0)\phi+\lambda\phi\leq 0~{\rm in}~\mathbb{R}\big{\}}.
\end{equation}
Precisely speaking, in \cite{[BDNZ]} it was shown that if $\lambda_1>0$ then all the solutions to the Cauchy problem of (\ref{origequa1}) with compactly supported initial data tend to zero
as $t\to+\infty$ (the species will die out); while if $\lambda_1<0$ then all
the positive solutions of (\ref{origequa1}) tend to the unique traveling pulse solution $\phi_c(x-ct)$ (the species will survive).
Similar results for the related models
in high dimensional space were also obtained in \cite{[BR2]} and \cite{[BR22]}.

For model (\ref{origequa1}) with a shifting term $f(x-ct,u)$ satisfying $f_u(-\infty,0)>0$,
Berestycki et al. \cite{[BR1],[BHR]} introduced another notion
of generalized principal eigenvalue $\lambda'_1$,  which is defined by
\begin{equation}\label{mz2}\lambda'_1:=\inf\big{\{}\lambda|\exists~ \phi\in C^2(\mathbb{R})\cap W^{2,\infty}(\mathbb{R}),~s.t.,~\phi''+c\phi'+f_u(z,0)\phi+\lambda\phi\geq0~{\rm in}~\mathbb{R}\big{\}}.
\end{equation}
Applying their results to (\ref{origequa1}),
it follows that (\ref{origequa1}) admits at least one forced traveling wave $\phi_c(x-ct)$
if $\lambda'_1<0$ and no wave solution if $\lambda'_1>0$.
Some related recent works for model (\ref{origequa1}) can be found in  \cite{[BR3],[FLW]}.
In \cite{[FLW]}, the authors derived the following explicit formula for $\lambda_1$ and $\lambda'_1$ (defined by (\ref{mz}) and (\ref{mz2})):
if $f_u(-\infty,0)>0\geq f_u(+\infty,0)$ and $\frac{\partial}{\partial z}f_u(z,0)<0$, then
\begin{equation}\label{S}
\lambda_1(c)=-f_u(-\infty,0)+\frac{c^2}{4},~~~~
\lambda'_1(c)=
\begin{cases}
-f_u(-\infty,0),~~~c\leq0,\\
\lambda_1(c),~~~~~~~~~~~~c\in(0,\overline{c}),\\
-f_u(+\infty,0),~~~~c\geq\overline{c},
\end{cases}
\end{equation}
with $\overline{c}=2\sqrt{f_u(-\infty,0)-f_u(+\infty,0)}$.


For the case when  $f(z,u)$ is non-increasing in $z$, $f(z,0)=f(z,1)=0$ for any $z\in \mathbb{R}$ and both $f_\pm(u)=f(\pm\infty,u)$ are   ignition, bistable or non-degenerate Fisher type, Hamel (\cite{[F1],[F]}) investigated the existence and asymptotic behavior of a cylinder  wave $U_c(x-ct,y)$  satisfying the following heterogeneous PDE equation in a cylinder  $\Sigma=\{(z,y),~z\in \mathbb{R},~y\in\omega\subseteq \mathbb{R}^{N-1}\}$ under  the zero Neumann boundary condition
\begin{equation}\label{vt}
\begin{cases}
\Delta_{z,y} U+\beta(z,y,c) \partial_z U+f(z,U)=0,~(z,y)\in\Sigma,\\
\partial_\nu U=0,~(z,y)\in \partial\Sigma,\\
U(-\infty,y)=1,U(+\infty, y)=0,
\end{cases}
\end{equation}
such $f(z,u)$ can be in the form $f(z,u)=b(z)\widetilde{f}(u)$ with $b(z)$ having positive limits at $z=\pm\infty$, or $f(z,u)=\widetilde{f}(u)+g(z,u)$ with a small perturbation
$g(z,u)\rightarrow 0$ as $z\to\pm\infty$.
Especially when $f(\pm\infty,u)$ are  non-degenerate asymptotic Fisher type, i.e. $f_u(\pm\infty,0)>0$,  $f_u(\pm\infty,1)<0$, under the assumption that $\beta(z,y,c)$ is  non-increasing in $z$; in \cite{[F]} it was proved that (\ref{vt}) admits at least one cylinder wave solution $U_c(z,y)$  if $c\geq c_-^*:=2\sqrt{f_u(-\infty,0)}$, and there exists no such wave solution if $c<c_+^*:=2\sqrt{f_u(+\infty,0)}$.

Berestycki and Fang \cite{[BF]} investigated the model (\ref{origequa1}) in one dimensional space, for the case  when  $f(z,0)=0$ for $z\in \mathbb{R}$ and $f(z,u)$ is asymptotic Fisher type as $z\to -\infty$ (i.e. $f(-\infty,u)$ has a unique positive root $\alpha$ and $f_u(-\infty,\alpha)<0$), and satisfies   $\frac{f(z,u)}{u}$ is non-increasing in $u>0$ for any $z\in \mathbb{R}$.
They obtained nearly complete results on existence, multiplicity and ordering structure of all forced traveling waves $\phi_c(x-ct)$ connecting $\phi_c(-\infty)=\alpha>0$ and $\phi_c(+\infty)=0$ of (\ref{origequa1}) except some degenerate cases when $f_u(+\infty,0)=0$.
In \cite{[BF]} it was shown that
(\ref{origequa1}) admits a unique forced wave solution when $f(+\infty,u)<0,~u\in(0,1)$ for $c<c_-^*:=2\sqrt{f_u(-\infty,0)}$;
 and there are infinitely many forced wave solutions
for $c>c_+^*:=2\sqrt{f_u(+\infty,0)}$ when both $f(\pm\infty,u)$
are Fisher type covering some typical cases such as $f(z,u)=u(a(z)-u)$ with $a(\pm\infty)>0$ and $f(z,u)=b(z)\widetilde{f}(u)$ with $b(\pm\infty)>0$.

There are many interesting research works on the existence of forced waves for
the model (\ref{origequa1}) with $f(z,u)=u(a(z)-u)$ satisfying $a(-\infty)=\alpha>0$,
$a(+\infty)=\beta<0$ and $a'(z)\leq 0,~z\in\mathbb{R}$, i.e
\begin{equation}\label{Forceddd11}
u_t=u_{xx}+u\big{(}a(x-ct)-u\big{)},~~~t>0,~x\in \mathbb{R},
\end{equation}
in such case of $a(z)$ the model (\ref{Forceddd11}) means
the spatial habitat is divided into a favorable region and a  unfavorable region.
Hu and Zou \cite{[HZ]} obtained the existence of monotone forced wave solution of (\ref{Forceddd11}) connecting $\phi_c(-\infty)=\alpha$ and $\phi_c(+\infty)=0$ for any $c<0$.
Fang et al.\cite{[FLW]} and Chen et al.\cite{[CTW]} proved that there exists a unique monotone decreasing forced wave connecting $\phi_c(-\infty)=\alpha$ and $\phi_c(+\infty)=0$ for any given $c\in(0,2\sqrt{\alpha})$.
Fang et al.\cite{[FLW]} further proved that there are infinitely many traveling pulse solutions satisfying $\phi_c(-\infty)=\phi_c(+\infty)=0$ for $c\leq -2\sqrt{\alpha}$.

In \cite{[BG],[HL],[LBSW],[VHH]}, the authors
studied the persistence and spreading dynamics of solutions for (\ref{origequa1}) or (\ref{Forceddd11}).
Other related population models with local or nonlocal  diffusion have been deeply studied in \cite{[Guo],[LY],[LWZ],[YCW],[QLW],[YZ],[ZK]}, see also \cite{[WLDQ]} for a recent survey of the related works on the ecological models with shifting environment.

As far as we know, there are very  few  literatures  concerning the  propagating dynamics and forced wave for the model \eqref{origequa1} with general asymptotic Fisher type $f(z,u)$ at $z=\pm\infty$ but $f(z,u)$ degenerates at $z=+\infty$ and $u=0$, i.e. $f(-\infty,\alpha)=0$ for some $\alpha>0$, $f(z,0)=0$ for any $z\in \mathbb{R}$ and $f_u(+\infty,0)=0$.  Even for the typical  degenerate  case when $f(z,u)=u(a(z)-u)$ with $a'(z)\le 0$ for all $z\in \mathbb{R}$ or $\frac{\partial}{\partial z}f_u(z,0)<0$ for all $z\in \mathbb{R}$, the formula   \eqref{S} obtained in  \cite{[FLW]} indicates that $\lambda_1'(c) = 0$ for all $c \geq 2\sqrt{f_u(-\infty,0)}$, which corresponds to the critical case when $\lambda_1'(c) = 0$. In  \cite{[BR1]} the authors raised an theoretical open problem about the existence of positive wave for more general nonlinear heterogeneous equation in such critical case (see Problem 4.6 in  \cite{[BR1]}: Does $u''(z)+cu'+f(z,u)=0$ admit positive bounded solutions if $\lambda'_1(c) = 0?$). The multiplicity/uniqueness  and the asymptotic stability  of traveling waves for the model \eqref{origequa1} with heterogeneous  degenerate $f(z,u)$ are also the interesting and challenging problems in both math and application.

In the following of this paper, we focus on the Fisher-KPP equation \eqref{Forceddd11} with $c>0$ and a degenerate moving resource function $a(x-ct)$  satisfying $a(-\infty)=\alpha>0$~and $a(+\infty)=0$ (asymptotic degenerate  Fisher type at $z=+\infty$ and $u=0$),
which is also a typical example of (\ref{origequa1}) for the degenerate case $f_u(+\infty,0)=0$.  From the biological point of view, the decay rate of the resource function $a(z)$ near $z=+\infty$  may characterize the sharpness of the shifting habitat edge: exponential decay to zero corresponds to a distinct and steep habitat boundary with rapid resource depletion; non-exponential slow decay reflects gradual habitat degradation
and slow resource exhaustion.

In \cite{[BF]},
the authors investigated  the Fisher-KPP model (\ref{Forceddd11}) for two sub-cases  of such degenerate $a(z)$ with $c>0$ and  proved  that the existence and the  asymptotic behavior of forced waves of (\ref{Forceddd11}) in such degenerate cases heavily depend on the decay rate of $a(z)$ near $z=+\infty$. Precisely speaking, they showed  that (\ref{Forceddd11}) admits a unique forced wave solution decaying exponentially at $z=+\infty$ if
$a(z)=o(e^{-\gamma z})$ for $\gamma>c$ and $c\in (0,2\sqrt{\alpha})$; and has a continuum of forced waves decaying slower than some algebraic rate when $a(z)=O(z^{-\gamma})$ for $\gamma\in(0,1)$ and any $c>0$.

In what follows, we always assume that $c>0$ and $a(z)$ satisfies
\begin{enumerate}
\item[({\bf $H_1$})]
$a(z)\in (0,\alpha]$ is a positive and continuous
function in whole $\mathbb{R}$ and is $C^1$ near $z=+\infty$ satisfying
$
a(+\infty)=0<a(-\infty)=\alpha,\;{\rm and}\;\; a'(z)\le 0,\;z\gg 1.
$
\end{enumerate}
Our main research interest  is to classify $a(z)$ by the decay rate of $a(z)$ at $z=+\infty$
and  investigate the decaying effect of $a(z)$ on
the existence, the multiplicity and the precise spatial decay of all forced waves. Due to the  degeneracy of both the linear and nonlinear reaction terms at $z=+\infty$ and $u=0$, (\ref{Forceddd11}) may admit a unique or infinitely many or multi-types of forced waves, and the spatial decay of some waves
may be not dominated by the  linearized heterogeneous  equation of \eqref{Forceddd11} at $u=0$ near $z=+\infty$ when  $\int_{z_0}^{+\infty}a(z){\rm d}z=+\infty$.
 It is worth mentioning that asymptotic analysis on these waves are more complicated than those for non-degenerate case of $a(z)$.

In Section 2, we shall sketch the strategy of our proofs and the approaches applied in our asymptotic analysis on some types of forced waves; and  shall  state the main results on the existence, the uniqueness, the multiplicity and the asymptotic behavior of all the forced waves
for  all the degenerate cases of $a(z)$ when $a(z)$ satisfies ({\bf $H_1$}).

\section{Formulation of the problems and statement of main results}

Let $u(t,x)=\phi_c(x-ct)$ be a forced traveling wave of (\ref{Forceddd11}),
then $\phi_c(z)$ is a positive global solution to the following
differential equation:
\begin{equation}\label{questt1}
\phi''_{c}(z)+c\phi'_c(z)+\phi_c(z)(a(z)-\phi_c(z))=0,~z\in \mathbb{R}
\end{equation}
with some asymptotic boundary conditions at $z=\pm\infty$.

\begin{lemma}\label{negnega1}
Assume that $a(z)$ satisfies ({\bf $H_1$}) with $c>0$,
let $\phi_c(z)$ be a positive global solution of (\ref{questt1}). Then
\begin{equation}\label{boudc1}
\begin{array}{c}0<\phi_c(z)\le \alpha,~\forall z\in\mathbb{R},\;\; \phi'_c(z)<0~~ {\rm for}~z\gg 1;\\
{\rm and}\quad \lim\limits_{z\to-\infty}\phi_c(z)=\alpha,~~
\lim\limits_{z\to+\infty}\phi_c(z)=0.
\end{array}
\end{equation}
\end{lemma}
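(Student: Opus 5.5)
The plan is to get the upper bound $\phi_c\le\alpha$ from a maximum-principle argument together with a blow-up argument, and then to read off the limits and the monotonicity from the sign of $\phi_c''$ at critical points.

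\textbf{Step 1: $\phi_c\le\alpha$.} Suppose $\phi_c(z_1)>\alpha$ for some $z_1$.
\begin{itemize}
\item If $\phi_c$ has an interior local maximum $z_0$ with $\phi_c(z_0)>\alpha\ge a(z_0)$, then $\phi_c'(z_0)=0$ and $\phi_c''(z_0)\le 0$. The equation instead gives $\phi_c''(z_0)=\phi_c(\phi_c-a)>0$, a contradiction.
\item Hence on the connected component of $\{\phi_c>\alpha\}$ containing $z_1$, $\phi_c$ is strictly monotone and this component reaches $+\infty$ or $-\infty$.
\item Say it reaches $+\infty$ with $\phi_c'>0$ (the case $-\infty$ is the same after setting $s=-z$, which turns the damping $+c\phi'$ into $-c\psi'$). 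Then $(e^{cz}\phi_c')'=e^{cz}\phi_c(\phi_c-a)\ge e^{cz}\phi_c(\phi_c-\alpha)>0$.
\item A finite limit $L>\alpha$ is impossible: it would force $\phi_c''+c\phi_c'\to L(L-a(\pm\infty))>0$, which is incompatible with $\phi_c'\to0$ along a sequence.
\item So $\phi_c\to\infty$. A comparison with the autonomous ODE $w''+cw'=w^2/2$ (valid once $\phi_c\ge 2\alpha$) gives finite-$z$ blow-up via the standard energy argument, contradicting global existence.
\end{itemize}
I expect this blow-up step, with the damping of either sign, to be the most technical part. I would handle it by multiplying by $\phi_c'$ and using $\phi_c''\ge \phi_c^2/2-c\phi_c'$ to obtain $\phi_c'\gtrsim \phi_c^{3/2}$ eventually.

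\textbf{Step 2: the limit at $+\infty$.} Since $0<\phi_c\le\alpha$, interior estimates bound $\phi_c'$ and $\phi_c''$. Take any sequence $z_n\to+\infty$. Translates $\phi_c(\cdot+z_n)$ converge in $C^2_{loc}$ to a bounded nonnegative entire solution $w$ of $w''+cw'-w^2=0$, because $a(+\infty)=0$. Such a $w$ must vanish:
\begin{itemize}
\item if $M=\sup w>0$, translating $w$ again gives a limit $\tilde w$ attaining its maximum $M$ at an interior point;
\item there $\tilde w''\le0$ and $\tilde w'=0$, so $-M^2=\tilde w''\le0$ holds but the equation forces $\tilde w''=M^2>0$, a contradiction.
\end{itemize}
Hence $\phi_c(+\infty)=0$.

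\textbf{Step 3: monotonicity for $z\gg1$.} At a critical point of $\phi_c$ we have $\phi_c''=-\phi_c(a-\phi_c)$. Therefore a strict local minimum requires $\phi_c\ge a$, and a strict local maximum requires $\phi_c\le a$.
\begin{itemize}
\item If $\phi_c$ oscillated for $z\gg1$, there would be a local minimum at $z_m$ followed by a local maximum at $z_M>z_m$.
\item Since $a'\le0$ there, $\phi_c(z_M)>\phi_c(z_m)\ge a(z_m)\ge a(z_M)\ge\phi_c(z_M)$, which is impossible.
\item Degenerate critical points with $\phi_c=a$ need a short separate check. If $\phi_c'(z^*)=0$ and $\phi_c(z^*)=a(z^*)$, I would compare with the sign of $\phi_c-a$ nearby, using $a'\le 0$, to rule out that $z^*$ is an accumulation point of critical points.
\end{itemize}
So $\phi_c$ is eventually monotone, and since it tends to $0$ it is eventually nonincreasing. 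Strictness, $\phi_c'<0$, then follows: $(e^{cz}\phi_c')'=-e^{cz}\phi_c(a-\phi_c)$ and $a-\phi_c>0$ eventually. The latter holds because if $\phi_c\ge a$ on a sequence of intervals, the monotonicity of $\phi_c$ still rules out $\phi_c'=0$ except at isolated points. This is the part I would verify most carefully.

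\textbf{Step 4: the limit at $-\infty$.} By the same critical-point argument, any local minimum $z_m\ll -1$ satisfies $\phi_c(z_m)\ge a(z_m)\to\alpha$. So either $\liminf_{z\to-\infty}\phi_c\ge\alpha$, which with Step 1 gives the limit $\alpha$, or $\phi_c$ is eventually monotone as $z\to-\infty$. In the monotone case the limit $\ell$ is, by the translation argument, an equilibrium of $w''+cw'+w(\alpha-w)=0$, so $\ell\in\{0,\alpha\}$. It remains to exclude $\ell=0$:
\begin{itemize}
\item then $\phi_c'>0$ and $\phi_c<a$ near $-\infty$, so $(e^{cz}\phi_c')'=-e^{cz}\phi_c(a-\phi_c)<0$;
\item hence $e^{cz}\phi_c'(z)\ge e^{cz_2}\phi_c'(z_2)=:C>0$ for $z<z_2$, that is, $\phi_c'(z)\ge Ce^{-cz}\to\infty$ as $z\to-\infty$;
\item this contradicts the derivative bound from Step 2.
\end{itemize}
Thus $\phi_c(-\infty)=\alpha$, which completes the proof of Lemma \ref{negnega1}.
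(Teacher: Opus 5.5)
There is a genuine gap in the last part of Step 3, where you prove $\phi_c'<0$ strictly. You assert that $a-\phi_c>0$ for $z\gg1$, but this is false in general under ({\bf $H_1$}). Take $c\in(0,2\sqrt{\alpha})$ and $a(z)=e^{-2cz}$ for large $z$. Then $\int^{+\infty}a<\infty$, and the exponentially decaying wave of Theorem \ref{thmfirst1}(I) satisfies $\phi_c(z)\sim Ke^{-cz}$ by Lemma \ref{decavr}. So $\phi_c>a$ for all large $z$, and $(e^{cz}\phi_c')'=e^{cz}\phi_c(\phi_c-a)>0$, which is the opposite of the sign you need. (The inequality $\psi<a$ of Proposition \ref{lepro} holds only for non-exponentially decaying solutions.)

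Your fallback sentence does not close the gap. Knowing that the zeros of $\phi_c'$ are isolated is not enough, because an isolated zero is exactly what must be excluded. Suppose $z_1$ is such a zero, with $\phi_c'\le0$ nearby. Then $z_1$ is a local maximum of $\phi_c'$, so $\phi_c''(z_1)=0$, hence $\phi_c(z_1)=a(z_1)$, and $\phi_c'''(z_1)=-a'(z_1)\phi_c(z_1)$ must be $\le 0$. This gives a contradiction only when $a'(z_1)<0$, while ({\bf $H_1$}) allows $a'(z_1)=0$. So strictness needs the sign of $a'$ on a whole neighbourhood, and your Step 3 never uses $a'\le0$ at this point.

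The missing ingredient is the one the paper uses. Since $a$ is $C^1$ near $+\infty$, differentiate the equation to get, for $v=\phi_c'$,
\[
v''+cv'+(a-2\phi_c)v=-a'\phi_c\ge 0,\qquad z\gg 1 .
\]
You already know $v\le 0$ there. Because the maximum value is $0$, the strong maximum principle applies whatever the sign of $a-2\phi_c$. So either $v<0$ or $v\equiv 0$ on a half-line, and the latter contradicts $\phi_c>0$, $\phi_c(+\infty)=0$.

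With this replacement the rest of your proposal works, and it is more self-contained than the paper's proof. The paper cites \cite{[CTW]} for $\phi_c\le\alpha$ and \cite{[BF]} for $\phi_c(-\infty)=\alpha$. It gets eventual monotonicity by showing, through a third-derivative computation, that $\phi_c'$ has no positive local maximum. You replace these with four arguments of your own:
\begin{itemize}
\item the blow-up argument for $\phi_c\le\alpha$;
\item the translation/Liouville argument for $\phi_c(+\infty)=0$;
\item the comparison of a local minimum followed by a local maximum, using that $a$ is nonincreasing;
\item the $e^{cz}\phi_c'$ growth argument at $-\infty$.
\end{itemize}

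Two minor points. First, that comparison works for non-strict extrema as well, so your ``degenerate critical point'' check is unnecessary for eventual monotonicity. Second, in Step 1 a component of $\{\phi_c>\alpha\}$ equal to $\mathbb{R}$ may be valley-shaped rather than monotone; it is still monotone towards one end, which is all you use.
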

\begin{proof}
{\rm The proof of Lemma \ref{negnega1} is left in the Appendix.}
\end{proof}\\
\begin{remark}
The proof of Lemma \ref{negnega1} also guarantees that if $a'(z)\le 0 $ for whole $z\in \mathbb{R}$ then $\phi'_c(z)<0$ and $0<\phi_c(z)\le \alpha$ for whole $z\in \mathbb{R}$, which is a traveling front solution of (\ref{questt1}); however if $a(z)$ is  non-decreasing in some long range, $\phi_c(z)$ may be not a traveling front solution.
\end{remark}
It is worth mentioning that the more general results and the  argument about the ordering of all forced waves, the existence of the maximal forced wave and the criteria  for the existence/non-existence of  minimal forced wave obtained  in  \cite{[BF]} for the more general model \eqref{origequa1} are still valid for the model (\ref{Forceddd11}) with a degenerate $a(z)$, which will also be useful in our later investigation.  For convenience of later citation, we summarize and restate the related results  obtained in \cite{[BF]} under the additional assumption of ({\bf $H_1$}) as the following  Theorem A.

{\bf Theorem A}~{\cite[Theorems 1.1-1.2]{[BF]}}\label{pppthmm1}
Under the assumption ({\bf $H_1$}) with $c>0$,  let $\phi_c(z)$ be a positive and globally bounded solution satisfying
\begin{equation}\label{nonl}
\begin{array}{l}
\phi_c''(z)+c\phi_c'(z)+\phi_c(z)(a(z)-\phi_c(z))=0,~~z\in\mathbb{R},
\\ \phi_c(-\infty)=\alpha>0.
\end{array}
\end{equation}
Then (\ref{nonl}) has no such solution, or has a unique solution, or has a continuum of solutions which are ordered.\\
(i) If (\ref{nonl}) has more than one positive global solution, then the maximal global solution $\phi^{\max}_c(z)$ of (\ref{nonl}) exists, with $\phi^{\max}_c(z)$ defined by
 $$\phi^{\max}_c(z):=\sup\big{\{}\phi_c(z)\,|\,
\phi_c''(z)+c\phi_c'(z)+\phi_c(z)(a(z)-\phi_c(z))=0,~\phi_c(z)>0,~z\in\mathbb{R}\big{\}}. $$
(ii) (\ref{nonl}) has a minimal positive global solution if and only if $\lambda_1<0$, where $\lambda_1$ is defined as \eqref{mz}.

Before stating our main results on the spatial decay of all forced waves $\phi_c(z)$ of (\ref{questt1}) and (\ref{boudc1}) near $z=+\infty$,  we
first classify any given $a(z)$ satisfying $(H_1)$ by the spatial decay of $a(z)$ near $z=+\infty$,
and then sketch the strategy of our asymptotic analysis on all local positive solutions decaying at $z=+\infty$ and introduce some classical or more general ODE asymptotic theories  which are applicable  to some sub-cases of $a(z)$.

In the following, let $\psi(z)$ be a local positive solution near $z=+\infty$ satisfying
\begin{equation}\label{bz}
\psi^{\prime\prime}+c\psi^{\prime}+\psi\big{(}a(z)-\psi\big{)}=0,~~ \psi(z)>0,\;z>z_0;\;\;\;\psi(+\infty)=0,
\end{equation}
for some $z_0\in\mathbb{R}$.
In Section 3,
we shall investigate the existence and the spatial decay of each local positive solution $\psi(z)$ to the problem
(\ref{bz}) as $z\to+\infty$.
As proved in Lemma \ref{negnega1}, such solution must satisfy $\psi'(z)<0$ for $z\gg1$.
We rewrite
(\ref{bz}) as the following first order nonlinear system
\begin{equation}\label{for1}
\mathbf{X}'=
\big{(}\Theta(z)+B(\mathbf{X})\big{)}\mathbf{X},~~\psi(z)>0,~\psi'(z)<0,~{\rm for}~z\geq z_0\gg1;\; \mathbf{X}(+\infty)=\overrightarrow{0},
\end{equation}
with~$\mathbf{X}=\big{(}
\psi(z),~
\psi'(z)\big{)}^{\top}$ and
$$
\Theta(z)={\left[
\begin{array}{c}
~~0~~~~~~~~~~1\\
-a(z)~~~-c
\end{array}
\right ]},~~
B(\mathbf{X})={\left[
\begin{array}{c}
~~0~~~~~~~0\\
\psi(z)~~~~~0
\end{array}\right ]}.
$$
Notice that $\Theta(z)\to\Theta^+$ as $z\to+\infty$ with
$
\Theta^+={\left[
\begin{array}{c}
0~~~~~1\\
0~~-c
\end{array}
\right ]}.
$
Using $a(+\infty)=0$,
it is easy to see that $\Theta(z)$ has two eigenvalues $\sigma_j(z)(j=1,2)$ satisfying
\begin{equation}\label{eig1}
\begin{aligned}
&\sigma_1(z)=\frac{-c-\sqrt{c^2-4a(z)}}{2}=-c+\frac{1}{c}a(z)+O\big{(}a^2(z)\big{)}\le -\frac{c}{2},~~z\gg1,\\
&\sigma_2(z)=\frac{-c+\sqrt{c^2-4a(z)}}{2}=-\frac{a(z)}{c}-\frac{2 a^2(z)}{c^3}+o\big{(}a^2(z)\big{)},~~z\gg1.
\end{aligned}
\end{equation}

{\bf Case I.}
$\int_{z_0}^{+\infty}a(z){\rm d}z<+\infty$.
In this case by applying the classical ODE asymptotic theories, it is known that for each $c>0$ and $K>0$, the problem (\ref{bz}) has a unique local positive
solution $\psi(z)$ which decays exponentially as $z\to+\infty$ and satisfies
\begin{equation} \label{expdx}
\psi(z)\sim K{\rm e}^{-cz}~~{\rm as}~z\to+\infty.
\end{equation}
Furthermore, using the fact that any non-exponential decay solution of (\ref{bz}) must satisfy $\psi(z)<a(z)$ for $z\gg1$ (see Prop.3.1), which implies  $\int^{+\infty}_{z_0}\big(a(z)-\psi(z)\big){\rm d}z<+\infty$. Then by applying classical ODE asymptotic theories it is easy to prove that in the {\bf Case I} (\ref{bz}) has no local positive solution decaying non-exponentially, which also means that the decaying rates of all the local positive solutions to \eqref{bz} near $z=+\infty$ are dominated by the linear homogeneous  limiting equation  $\psi^{\prime\prime}(z)+c\psi^{\prime}(z)=0$.\\

{\bf Case II.}
$\int_{z_0}^{+\infty}a(z){\rm d}z=+\infty$.
In such case, it is well known that the asymptotic behavior of a solution
$(\psi(z),~\psi'(z))^{\top}$ to (\ref{for1}) near $z=+\infty$ is no longer dominated  by  the linear homogeneous limiting system $\mathbf{X}'=
\Theta^+\mathbf{X}$, which is more closely related with the decay rates of  the solutions to the linear heterogeneous system $\mathbf{X}'=
\Theta(z)\mathbf{X}$. Using the fact that $\exp\big{(}\int_{z_0}^{z}\sigma_1(s){\rm d}s\big{)}$ decays to zero exponentially  as $z\to+\infty$; it can be proved that for each fixed $K>0$
there exists a unique local positive solution $\psi(z)$ to the nonlinear equation (\ref{bz}) satisfying
\begin{equation}\label{ogf0i}\psi(z)\sim K\exp\Big{(}\int_{z_0}^z\sigma_1(s){\rm d}s\Big{)},~{\rm as}~z\to+\infty,\end{equation}
which decays exponentially fast but decay a little slower than $K_1{\rm e}^{-cz}$ for any $K_1>0$. Only note that
$-c<\sigma_1(z)<-c+\delta,~\forall~ 0<\delta\ll 1$.

Now we sketch the proof of the existence of such local positive and exponential  decay solution.
After some transformation
$\mathbf{X}=T(z)P(z)$,  (\ref{for1}) becomes the following nonlinear system of $P(z):$
\begin{equation}\label{eig111}
P'(z)=\Lambda(z)P+\{T^{-1}T'+\widetilde{B}(P,z)\}P,~~
P(z)={\left[
\begin{array}{c}
P_1(z)\\
P_2(z)
\end{array}
\right ]};~
P(+\infty)=\overrightarrow{0}, \end{equation}
where $\Lambda(z)={\rm diag}\{\sigma_1(z),\sigma_2(z)\}$ and $\widetilde{B}(P,z)=O(|P(z)|)$ if $|P(z)|\ll 1$, and
$$T(z)={\left[
\begin{array}{c}
1~~~~~~~~~~~1\\
\sigma_1(z)~~~~\sigma_2(z)
\end{array}
\right ]},~~T^{-1}(z)T'(z)=\frac{1}{\sigma_+(z)-\sigma_-(z)}{\left[
\begin{array}{c}
-\sigma'_1(z)~~~-\sigma'_2(z)\\
-\sigma'_1(z)~~~~~~\sigma'_2(z)
\end{array}
\right ]}. $$
By virtue of $a'(z)\leq 0$~for $z\gg 1$,
it is easy to see that for any fixed $z_0\gg1$
\begin{equation*}\label{intecon1}
\int_{z_0}^{+\infty}|\Lambda'(s)|{\rm d}s<+\infty~{\rm and}~\int_{z_0}^{+\infty}\big{|}T^{-1}(s)T'(s)\big{|}{\rm d}s<+\infty.
\end{equation*}
By applying more general ODE asymptotic theories ({see \cite[Chapter~IV,~Theorem 11]{[CWA]}}) to \eqref{bz} or (\ref{eig111}),
it follows that if a local positive solution $\psi(z)$ to \eqref{eig111} satisfies $\int^{+\infty}_{z_0} |P(z)|{\rm d}z<+\infty$ (or a local positive  solution $\psi(z)$ to \eqref{bz} satisfies $\int^{+\infty}_{z_0} \psi(z){\rm d}z<+\infty$), then the decay rate of such solution to \eqref{eig111} is
the same as one of solution to the linear heterogeneous system $P'(z)=\Lambda(z)P(z)$. Further, by applying the standard perturbation argument it can be proved that for each $K>0$ the nonlinear equation \eqref{bz} with $c>0$
admits a unique local positive solution decaying exponentially  as $z\to +\infty$ and satisfying (\ref{ogf0i}).

 Note that the assumption $\int_{z_0}^{+\infty}a(z){\rm d}z=+\infty$ and (\ref{eig1}) imply that
 $\int_{z_0}^{+\infty}\sigma_2(z){\rm d}z=-\infty$, thus the linear heterogeneous system $P'(z)=\Lambda(z)P(z)$ (or the linear heterogeneous equation $\psi^{\prime\prime}(z)+c\psi^{\prime}(z)+a(z)\psi(z)=0$) also admits a family of  positive solutions decaying like  $K\exp\big{(}\int_{z_0}^z\sigma_2(s){\rm d}s\big{)}$ with some $K>0$, which decay to zero non-exponentially (decay slower than
any exponential decay function) as $z\to +\infty$. Moreover, if $\exp\big{(}\int_{z_0}^z\sigma_2(s){\rm d}s\big{)}\in L^1([z_0,+\infty))$, then by applying the standard perturbation argument it can be proved that the nonlinear equation \eqref{bz} has a  family of local positive solutions $\psi(z)$  decaying like $K\exp\big{(}\int_{z_0}^{z}\sigma_2(s){\rm d}s\big{)}$ for some constant $K>0$.

In this paper we are more interested in the case when $\int_{z_0}^{+\infty}a(z){\rm d}z=+\infty$, in such case \eqref{bz} may admit a family or several families of local positive solutions decaying non-exponentially at $z=+\infty$, which implies that there may exist multi-types of forced waves. The results on the existence and the precise non-exponential decay rates of local positive solutions of \eqref{bz} will be crucial in our later investigation on the existence and the multiplicity of some types of forced waves decaying non-exponentially. Moreover, our on-going research work also indicates that the nonlinear stability of forced waves and the longtime behavior of solutions with more general initial data also heavily depend on the precise decay rates of all the waves near $z=+\infty$.

In Section 3 we shall focus on the investigation of the existence/nonexistence and the precise non-exponential decay rate of all the local positive solutions of \eqref{bz}, Our later asymptotic analysis on non-exponential decay highly  depends on the decay rate of $a(z)$ and the existence/non-existence of such solution is  crucially determined by a criteria $\widetilde{\sigma}_2(z):=\exp\big{(}\int_{z_0}^z\sigma_2(s){\rm d}s\big{)}\in L^1([z_0,+\infty))$ or not, with
$\sigma_2(s)=\frac{-c+\sqrt{c^2-4a(z)}}{2}$.


In Section 3 we shall classify the decay rate of $a(z)$  by the following  three sub-cases:
\begin{equation*}
\begin{aligned}
&(1)\int_{z_0}^{+\infty}a(z){\rm d}z<+\infty;\\
&(2) \int_{z_0}^{+\infty}a(z){\rm d}z=+\infty\;{\rm and}\;\;\int^{+\infty}_{z_0}\widetilde{\sigma}_2(z){\rm d}z=+\infty;\\
&(3)\int_{z_0}^{+\infty}a(z){\rm d}z=+\infty \;{\rm and}\; \int^{+\infty}_{z_0}\widetilde{\sigma}_2(z){\rm d}z<+\infty.
\end{aligned}
\end{equation*}
which include the following  four typical decaying cases:
\begin{equation*}
\begin{aligned}
&{\rm (A)}
\varlimsup\limits_{z\to+\infty}za(z)<c, \;\;\;\;\;\;\;\;\;\;\;\;\;\;\;{\rm (B)}\;\lim\limits_{z\to+\infty}za(z)=c,\\
&{\rm (C)}
~c<\varliminf\limits_{z\to+\infty}za(z)< +\infty, \;\;\;\;\;\;\;{\rm (D)}
~\varliminf\limits_{z\to+\infty}za(z)= +\infty
\end{aligned}
\end{equation*}

In Section 3 by applying detailed asymptotic estimates and by applying contradictory argument we shall show that  for  Case (1) and Case  (2) the problem  \eqref{bz} has no local  positive solution decaying non-exponentially at $z=+\infty$, where  Case (1) and Case (2)  include  Case (A) and some  sub-critical cases  of Case (B).

In the Case (3), using the fact that $\widetilde{\sigma}_2(z)\in L^1([z_0,+\infty))$, by applying
 more general ODE asymptotic theories as stated above, it follows that  there exist a family of local positive solutions to the nonlinear equation \eqref{bz}, which decay like $K\widetilde{\sigma}_2(z)$ for some positive constant $K$.

For  Case (C): $c<\varliminf\limits_{z\to+\infty}za(z)< +\infty$, it is easy to  see  that
\begin{equation}\label{x}
\widetilde{\sigma}_2(z)\sim K\exp\Big(\frac{-1}{c}\int_{z_0}^za(s){\rm d}s\Big)=O\big{(}z^{-(1+\delta)}\big), \;\exists~\delta>0,\;{\rm as}~z\to+\infty.
\end{equation}
Thus $\widetilde{\sigma}_2(z)\in L^1([z_0,+\infty))$, which  guarantees that   there exist a family of local positive solutions to the nonlinear equation  \eqref{bz} decaying algebraically like $K\exp\Big(\frac{-1}{c}\int_{z_0}^za(s){\rm d}s\Big)$.

For the Case (D), it is easy to check that
 $\widetilde{\sigma}_2(z)\in L^1([z_0,+\infty))$ and $\widetilde{\sigma}_2(z)$ decays faster than any algebraic decaying function, thus \eqref{bz} admits a family of local positive solutions decaying like $K\widetilde{\sigma}_2(z)$, which decay non-exponentially but not algebraically.

For the critical case (B): $a(z)\sim \frac{c}{z}$, or more general critical  case of $a(z)$ (see Remark \ref{rem3.1}); the estimates on the decay rates of $\widetilde{\sigma}_2(z)$ seem to be more complicated than other three cases (non-critical), which highly depend on the second order or higher order asymptotic
expansion of $a(z)$ or the precise expression of $a(z)$ near $z= +\infty$. In Section 3 we classify the typical critical case $a(z)\sim \frac{c}{z}$ into several typical sub/super-critical cases (see \eqref{crit-} and \eqref{crit+}),  or simply classified by the criteria:  $\int_{z_0}^{+\infty}
\widetilde{\sigma}_2(z){\rm d}z$ is infinite or finite.  For the sub-critical case when
$\int_{z_0}^{+\infty}\widetilde{\sigma}_2(z){\rm d}z=+\infty $, our asymptotic results  obtained in Section 3 indicate that
  the nonlinear equation \eqref{bz} does not admit  non-exponential decaying local positive solution; however in the super-critical cases when   $\int_{z_0}^{+\infty}\widetilde{\sigma}_2(z){\rm d}z<+\infty $, by applying the above mentioned ODE asymptotic theories it follows that
 there exist a family of  local positive solution decaying  like $K\widetilde{\sigma}_2(z)$.

It is worth mentioning that for the Case (3), 
except the existence of infinitely many  non-exponential local positive solutions decaying like $K\widetilde{\sigma}_2(z)$, which are in  $L^1([z_0,+\infty))$; there may exist other types of local positive solutions which are not in  $L^1([z_0,+\infty))$. The existence and the precise decay of such local positive solutions with slower non-exponential decay to the nonlinear equation \eqref{bz} are closely related with both the linear term $a(z)\psi(z)$ and  nonlinear term $-\psi^2(z)$ in the equation \eqref{bz}. In Section 3 we shall also prove that for the Case (3), \eqref{bz} also admits another type of local positive solutions which decay non-exponentially and  are not in $L^1([z_0,+\infty))$, and thus decay more slowly than
$\widetilde{\sigma}_2(z)$, we shall also obtain the precise decay rate of such local positive solutions.

In Sections 4, we shall
 further prove the existence and the uniqueness of non-exponential traveling waves with the  slowest decaying rate, which corresponds to the maximal wave of (\ref{Forceddd11}) for Case (3).

If we consider the spatial decay of a local positive solution $\psi(z)$ of (\ref{questt1}) for $z\ll-1$ satisfying $\psi(-\infty)=
\alpha>0$. Let $v=\alpha-\psi(z)$, then $v(z)$ satisfies
\begin{equation}\label{cmbb}
v''+cv'+\big{(}a(z) -2\alpha\big{)v+v^2}-\alpha\big{(}a(z)-\alpha\big{)}=0,~~z\ll-1,
\end{equation}
and $v(-\infty)=0$. Due to the non-degeneracy of $f(z,u)=u(a(z)-u)$ at $z=-\infty$ and $u(-\infty)=\alpha>0$, it is naturally expected that the decay rate of a solution $v(z)$ of (\ref{cmbb})
at $z=-\infty$ is closely related with the following inhomogeneous linear   equation:
\begin{equation}\label{cmbbw2}
 v''+cv'+\big{(}a(z) -2\alpha\big{)} v-\alpha\big{(}a(z)-\alpha\big{)}=0.
\end{equation}
 It can be proved that $\psi(z)\to \alpha$ exponentially as $z\to-\infty$  as long as $a(z)\to \alpha$ exponentially as $z\to-\infty$; and $\psi(z)\to \alpha$ non-exponentially as $z\to-\infty$  as long as $a(z)\to \alpha$ non-exponentially as $z\to-\infty$. It is worth mentioning that
the spatial decay of $a(z)$ at $z=-\infty$ does not affect our later investigation on the existence  or the multiplicity of forced waves of (\ref{questt1}).

Our main results in this paper can be stated as follows:

\begin{thm}\label{thmfirst1}
Suppose that  ({\bf $H_1$}) is satisfied and let  $c>0$ be fixed.

{\bf (I)} If $c\in (0,2\sqrt{\alpha})$, then (\ref{Forceddd11}) admits a
forced wave solution
$\phi_c(x-ct)$ satisfying $\phi_c(-\infty)=\alpha>0$ and $\phi_c(+\infty)=0$, which  decays exponentially at $z=+\infty$ and satisfies
\begin{equation}\label{o}
\phi_c(z)\sim A_1\exp\Big{(}\int_{z_0}^z\frac{-c-\sqrt{c^2-4a(s)}}{2}{\rm d}s\Big{)},~as~z\rightarrow+\infty,
\end{equation}
for some $z_0\in\mathbb{R}$ and $A_1>0$. Moreover, the forced wave $\phi_c(z)$ decaying exponentially  at $z=+\infty$ must be unique.

{\bf (II)} If $c\ge 2\sqrt{\alpha}$, then  (\ref{Forceddd11}) has no forced wave decaying exponentially at $z=+\infty$.

{\bf (III)} If $c>0$ and  $a(z)$
 further satisfies
\begin{equation}\label{214}
  \int_{z_0}^{+\infty}\widetilde{\sigma}_2(z){\rm d}z=+\infty,\;\widetilde{\sigma}_2(z)=\exp\left(\int^z_{z_0} \frac{-c+\sqrt{c^2-4a(s)}}{2}{\rm d}s\right),\end{equation}
for some $z_0\in \mathbb{R}$, then (\ref{Forceddd11}) has no forced wave $\phi_c(z)$ decaying non-exponentially at $z=+\infty$.
 \end{thm}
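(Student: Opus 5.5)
Part (I) follows the sub/super-solution route plus Theorem A(ii), and uniqueness comes from a sliding argument. Since $c<2\sqrt\alpha$, the principal eigenvalue $\lambda_1=-\alpha+c^2/4<0$. Theorem A(ii) would then give a minimal positive global solution, but I will construct the exponentially decaying one directly. For a supersolution I take $\overline\phi=\min\{\alpha, K\widetilde\sigma_1(z)\}$, where $\widetilde\sigma_1(z)=\exp(\int_{z_0}^z\sigma_1)$ solves the linear equation $\psi''+c\psi'+a\psi\approx0$ up to the integrable error $\sigma_1'$ (which may need a small correction factor). For a subsolution I take a compactly supported principal eigenfunction bump $\underline\phi=\epsilon e^{-cz/2}\cos(\omega z)$ on a long interval far to the left, where $a\approx\alpha$; this is a subsolution precisely because $c^2/4<\alpha$. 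Monotone iteration on expanding intervals, together with Lemma \ref{negnega1}, yields a global solution with $\phi(-\infty)=\alpha$. Being below $K\widetilde\sigma_1$, it decays exponentially.

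For the precise asymptotics (\ref{o}) I apply the Case II classification (or Case I when $\int a<\infty$) of local solutions of (\ref{bz}): since $\phi\in L^1$, the Levinson-type theorem \cite[Chapter IV, Theorem 11]{[CWA]} says $\phi$ behaves like $K_1\widetilde\sigma_1$ or $K_2\widetilde\sigma_2$. The upper bound excludes the second alternative, giving (\ref{o}).

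For uniqueness, let $\phi_1,\phi_2$ both decay exponentially. Any exponentially decaying local solution is asymptotic to $A\widetilde\sigma_1$, and $\widetilde\sigma_1$ decays strictly faster than $a$-weighted terms, so $\phi_i/\widetilde\sigma_1\to A_i$. I then apply sliding in amplitude: let $\tau^*=\inf\{\tau\ge1:\tau\phi_1\ge\phi_2\}$, which is finite thanks to the asymptotics at both ends (at $-\infty$ both tend to $\alpha$). The function $\tau\phi_1$ is a supersolution when $\tau>1$ because $-\tau^2\phi_1^2\le-\tau\phi_1^2$. If $\tau^*>1$, the strong maximum principle forces either interior touching, which is impossible, or touching at $\pm\infty$. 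At $-\infty$, touching would mean $\tau^*\alpha=\alpha$, contradicting $\tau^*>1$. At $+\infty$, touching means $\tau^*A_1=A_2$, and then I need a finer comparison of $w=\tau^*\phi_1-\phi_2$: $w$ satisfies a linear equation with a positive forcing $(\tau^*-\tau^{*2})\phi_1^2\cdot(-1)\ge0$, and I expect $w$ to decay like $\widetilde\sigma_1$ with a positive coefficient, contradicting $w/\widetilde\sigma_1\to0$. This boundary case at $+\infty$ is the main obstacle; I would handle it via the variation-of-constants representation of $w$ near $+\infty$. Conclude $\tau^*=1$, so $\phi_1\ge\phi_2$; by symmetry the two are equal.

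For (II), suppose $\phi$ decays exponentially with $c\ge2\sqrt\alpha$. Following \cite{[BF]}, I compare with the principal eigenvalue: then $\lambda_1\ge0$. Since $\phi\to\alpha$ at $-\infty$, the shift $\phi(\cdot-n)$ converges to $\alpha$ locally, and on a large interval it dominates a Dirichlet eigenfunction. I would instead use a direct argument: multiply the equation by $e^{cz/2}$ and set $w=e^{cz/2}\phi$, giving $w''+(a-\phi-c^2/4)w=0$. Exponential decay of $\phi$ like $\widetilde\sigma_1\approx e^{-cz}$ gives $w\to0$ at $+\infty$, and $w\to0$ at $-\infty$ as well. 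Since $a-\phi<\alpha\le c^2/4$ (using $\phi>0$), the coefficient is negative, so $w''>0$ wherever $w>0$. A positive convex function vanishing at both ends is impossible, which proves (II).

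For (III), suppose $\phi$ decays non-exponentially. By Prop. 3.1, $\phi<a$ eventually. The Section 3 analysis (Cases (1),(2)) states that under (\ref{214}) no local non-exponential solution of (\ref{bz}) exists, and I would invoke that result directly. The underlying mechanism is that $\psi\in L^1$ forces Levinson asymptotics $\sim K\widetilde\sigma_2\notin L^1$, which is a contradiction, while $\psi\notin L^1$ is ruled out by comparing $\psi$ with $\widetilde\sigma_2$ via the Riccati variable $\psi'/\psi$.
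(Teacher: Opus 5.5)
Your proposal is correct. For existence in (I), and for (II) and (III), it follows the paper; the uniqueness argument in (I) is where you take a different route.

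\textbf{Where you match the paper.} You use the same cosine bump $\delta e^{-cz/2}\cos(\cdot)$ far to the left as subsolution, an exponential supersolution capped by $\alpha$, Lemma \ref{decavr} for the asymptotics (\ref{o}), the substitution $W=e^{cz/2}\phi$ with convexity for (II), and a direct appeal to Theorem \ref{deca} for (III). The paper's supersolution is $\min\{\alpha,ke^{-(c-\epsilon)z}\}$. Your $K\widetilde{\sigma}_1$ also works and needs no correction factor: $\sigma_1'=a'/\sqrt{c^2-4a}\le 0$ makes it a strict supersolution for $z\gg1$.

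\textbf{Where you differ: uniqueness.} The paper integrates the Wronskian identity $(W_1'W_2-W_2'W_1)'=e^{-cz/2}W_1W_2(W_1-W_2)$ over $\mathbb{R}$. The boundary terms vanish because $\phi_i=o(e^{-(c-\epsilon)z})$, and the ordering $W_1<W_2$, taken from Theorem A, gives the integrand a sign. Your amplitude sliding does not need Theorem A's ordering; it produces the comparison itself. The cost is the touching-at-$+\infty$ case, which you leave as a plan.

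\textbf{Your plan for that case is sound.} The equation for $w$ is
\[
w''+cw'+(a-\tau^*\phi_1-\phi_2)w=-\tau^*(\tau^*-1)\phi_1^2 .
\]
Its coefficient has bounded variation, since the $\phi_i$ are eventually monotone, so Levinson asymptotics apply. The decaying particular solution is negative and $o(\widetilde{\sigma}_1)$. The condition $w=o(\widetilde{\sigma}_1)$ removes both homogeneous modes, which would force $w<0$.

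\textbf{A shorter way to close it.} The ratio $\rho=\phi_2/\phi_1$ satisfies
\[
\bigl(e^{cz}\phi_1^2\rho'\bigr)'=e^{cz}\phi_1^3\rho(\rho-1),\qquad e^{cz}\phi_1^2\rho'=e^{cz}(\phi_1\phi_2'-\phi_1'\phi_2)=-(W_1'W_2-W_2'W_1)\to0 \text{ as } z\to+\infty .
\]
Suppose $\rho\le\tau^*$ and $\rho\to\tau^*>1$ at $+\infty$. Near $+\infty$ the quantity $e^{cz}\phi_1^2\rho'$ is increasing with limit $0$, so it is negative there. Hence $\rho'<0$, so $\rho>\tau^*$ near $+\infty$, which is a contradiction.

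This shows your sliding argument and the paper's integral argument rest on the same Wronskian identity. The paper's version is a one-line integration that borrows the ordering from Theorem A. Yours is self-contained but needs this extra boundary analysis.
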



\begin{remark}\label{rem2.1}
  The assumption (\ref{214}) includes the following three
typical cases:
\begin{enumerate}
\item[${\rm (i)}$]
$\displaystyle\int_{z_0}^{+\infty}a(z){\rm d}z<+\infty$;
\end{enumerate}
\begin{enumerate}
\item[${\rm (ii)}$]
$\displaystyle\int_{z_0}^{+\infty}a(z){\rm d}z=+\infty$ and $\varlimsup\limits_{z\rightarrow+\infty}za(z)< c$;
\end{enumerate}

\begin{enumerate}
\item[${\rm (iii)}$]
$\lim\limits_{z\rightarrow+\infty}za(z)=c$ and there exist an integer $k\geq1$ and $0<r\leq c$
such that
\begin{equation}\label{crit-}
\begin{aligned}
a(z)\leq& c\sum_{j=0}^{k-1}\Big{(}\frac{1}{z}\prod_{i=0}^j\big{(}\ln^i z\big{)}^{-1}\Big{)}
+\frac{r}{z}\prod_{i=1}^k\big{(}\ln^i z\big{)}^{-1},~~~z\gg1,
\end{aligned}
\end{equation}
with $\ln^{0}z=1$ and $\ln^jz:=\underbrace{\ln\cdot\cdot\cdot\ln}_jz$.
\end{enumerate}
 \end{remark}
The verification of Remark \ref{rem2.1} can be found in Section 3.

\begin{remark}\label{rem2.2}
Theorem  \ref{thmfirst1} also implies  that under the assumption of  (\ref{214}) if $c\in (0,2\sqrt{\alpha})$
then (\ref{Forceddd11}) admits a unique forced wave which must decay exponentially, however for any $c\ge 2\sqrt{\alpha}$ (\ref{Forceddd11}) has no forced wave.
\end{remark}

\begin{thm}\label{thmfirspk}
Suppose that  ({\bf $H_1$}) is satisfied and let $c>0$ be fixed, if $a(z)$ satisfies
\begin{equation}\label{plh}
\displaystyle\int_{z_0}^{+\infty}\widetilde{\sigma}_2(z){\rm d}z<+\infty,\;\;\widetilde{\sigma}_2(z)=\exp\left(\int^z_{z_0} \frac{-c+\sqrt{c^2-4a(s)}}{2}{\rm d}s\right),
\end{equation}
for some $z_0\in\mathbb{R}$, then (\ref{Forceddd11})
admits infinitely many ordered forced wave solutions $\phi_c(x-ct)$ satisfying $\phi_c(-\infty)=\alpha>0$
and $\phi_c(+\infty)=0$, which decay non-exponentially at $z=+\infty$ (decay slower than any exponential decay functions) and satisfy
\begin{equation}\label{fro}\phi_c(z)\sim K\widetilde{\sigma}_2(z)\in L^1([z_0,+\infty)),
~as~z\to+\infty,
\end{equation}
for some positive constant $K$ and there exist no other types of non-exponential
waves belonging in $L^1([z_0,+\infty))$.  Furthermore,
there exists a unique  forced wave not in $L^1([z_0,+\infty))$, which corresponds to  the maximal forced wave $\phi^{\max}_c(z)$ and must satisfy
\begin{equation}\label{2eqpb}
\phi^{\max}_c(z)\sim \frac{c\widetilde{\sigma}_2(z)}{\int_z^{+\infty} \widetilde{\sigma}_2(s) ds},~as~z\to+\infty.
\end{equation}
\end{thm}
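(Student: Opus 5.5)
The plan is to first classify all local positive solutions of \eqref{bz} under \eqref{plh}, and then build global waves from these local profiles by sub-/super-solution arguments combined with the ordering in Theorem A.

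\textbf{Step 1 (local solutions).} Under \eqref{plh}, every non-exponentially decaying local solution satisfies $\psi<a$ for $z\gg1$ (Prop.~3.1). If $\psi\in L^1$, the nonlinear term $-\psi^2$ is an integrable perturbation of the diagonalized system \eqref{eig111}. Levinson-type theory \cite[Chapter~IV, Theorem 11]{[CWA]} then gives $\psi\sim K\widetilde\sigma_2$ or $\psi\sim K\exp(\int\sigma_1)$. Conversely, for each $K>0$ a contraction argument on $[z_0,\infty)$ produces such a solution, using $\widetilde\sigma_2\in L^1$.

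For solutions not in $L^1$, reduce the order. Write $\psi=\widetilde\sigma_2 w$. Then $w$ satisfies, to leading order after discarding the exponentially small $\sigma_1$ mode, a first-order equation of the form
$$(c+2\sigma_2)w'\approx -\widetilde\sigma_2 w^2,$$
that is, $cw'\approx-\widetilde\sigma_2w^2$. Integrating gives $1/w\sim c^{-1}\int_{z_0}^z\widetilde\sigma_2+C$. Solutions with $1/w\to C+c^{-1}\int_{z_0}^\infty\widetilde\sigma_2>0$ are exactly the $L^1$ family. The borderline choice $1/w\sim c^{-1}\int_z^\infty\widetilde\sigma_2$ (that is, $1/w\to0$) yields
$$\psi\sim \frac{c\widetilde\sigma_2}{\int_z^\infty\widetilde\sigma_2}.$$
This profile is not in $L^1$, because $\int^Z$ of it equals $-c\ln\int_Z^\infty\widetilde\sigma_2\to\infty$.

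I will make this rigorous by constructing explicit sub/super-solutions $\frac{(c\pm\varepsilon)\widetilde\sigma_2}{\int_z^\infty\widetilde\sigma_2}$ near $+\infty$. I will also show uniqueness of the non-$L^1$ local behavior via a Riccati/comparison argument on $w$.

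\textbf{Step 2 (global waves).} For each $K>0$, glue the local solution $K\widetilde\sigma_2$ to $\alpha$ at $-\infty$. A sub-solution is $\max\{0,\underline\psi\}$, built from the local solution with small $K$, or from a compactly supported principal eigenfunction on the left, where $a\approx\alpha$. A super-solution is $\min\{\alpha,\bar\psi\}$. Monotone iteration, or a variational shooting argument on the one-parameter family of local solutions, then gives global solutions with $\phi_c\sim K\widetilde\sigma_2$.

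Infinitely many such waves arise: two waves with different asymptotic constants $K$ are distinct, and they are ordered by Theorem A. Since multiple waves exist, Theorem A(i) gives the maximal wave $\phi_c^{\max}$. Choosing $K\to\infty$ and taking limits produces a wave that dominates $K\widetilde\sigma_2$ for every $K$. It therefore cannot be in the $L^1$ class, and by Step 1 it must have the slow decay \eqref{2eqpb}.

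\textbf{Step 3 (uniqueness of the non-$L^1$ wave).} Any two waves with the asymptotics \eqref{2eqpb} have ratio tending to $1$ at $+\infty$ and both tend to $\alpha$ at $-\infty$. A sliding argument in the multiplicative sense, using that $f(z,u)/u$ is decreasing in $u$, shows they coincide. Applied with $\phi^{\max}$, this shows that the maximal wave is the unique non-$L^1$ wave.

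\textbf{Main obstacle.} I expect the hardest part to be the precise asymptotics of the non-$L^1$ local solutions. Because $\int a=\infty$, the linearization does not dominate, and one must control the coupling of the $\sigma_1$ mode and the error terms $\sigma_2'$ and $O(a^2)$ relative to $\widetilde\sigma_2 w^2$ uniformly. I would first try the explicit barrier pair described in Step 1 and verify the differential inequalities using $\sigma_2'=O(a')$ and the monotonicity of $a$.
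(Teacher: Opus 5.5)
\textbf{Gap in Step 2: multiplicity is not established.} Your Steps 1 and 3 agree with the paper in substance: the $L^1$/non-$L^1$ dichotomy for non-exponential local solutions, and the multiplicative sliding argument, which is exactly Lemma~\ref{extence31}. The problem is Step 2.

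You propose to trap, for each $K>0$, a wave between $\max\{0,\underline\psi\}$ and $\min\{\alpha,\bar\psi\}$, with both barriers $\sim K\widetilde\sigma_2$. But you never produce the upper barrier, and the natural candidates fail.
\begin{itemize}
\item A local solution $\psi_K$ is defined only near $+\infty$. Nothing forces its backward continuation to reach the level $\alpha$. For the lower barrier when $c\ge 2\sqrt{\alpha}$, where no eigenfunction bump exists, nothing forces it to reach $0$ either.
\item The explicit mirror of the paper's sub-solution, $\bar u=K\widetilde\sigma_2(1+Mb)$ with $b(z)=\int_z^{+\infty}\widetilde\sigma_2$, has residual
\[
\bar u''+c\bar u'+\bar u(a-\bar u)=K\widetilde\sigma_2\,\sigma_2'\,(1+Mb)-K\widetilde\sigma_2^2\big[M(c+3\sigma_2)+K(1+Mb)^2\big].
\]
Since $\sigma_2'=-2a'/\sqrt{c^2-4a}\ge 0$, this is positive wherever $|a'|$ is large relative to $\widetilde\sigma_2$. $(H_1)$ allows this, for instance a decreasing $a$ with short steep drops.
\item Your local super-barrier $(c+\varepsilon)\widetilde\sigma_2/\int_z^{+\infty}\widetilde\sigma_2=(1+\delta)\psi^*$ has the same defect. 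Its residual is $(1+\delta)\psi^*\big[\sigma_2'-(\delta+o(1))\psi^*\big]$. This is why Theorem~\ref{decay1} passes to a slowly varying majorant $a_+\ge(1+\delta)a$ with $a_+'/a_+\to 0$.
\end{itemize}
With only lower barriers and the super-solution $\alpha$, every choice of $A$ may return the same wave, since iterating down from $\alpha$ always yields $\phi_c^{\max}$. So you have not shown that even two waves exist. Also, ``for every $K>0$'' goes beyond what the theorem asserts.

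\textbf{How the paper avoids upper barriers.} It uses a single sub-solution $A\widetilde\sigma_2(1-Mb)$, set to $0$ for $z\le z_M$, with $Mb(z_M)=1$ and $M>2A/c$. Its residual is
\[
A\widetilde\sigma_2\sigma_2'(1-Mb)+A\widetilde\sigma_2^2\big[M(c+3\sigma_2)-A(1-Mb)^2\big],
\]
so here $\sigma_2'\ge 0$ works in your favour. Together with the super-solution $\alpha$ this gives one non-exponential wave. The trichotomy of Theorem A then turns ``more than one wave'' into a continuum:
\begin{itemize}
\item For $c<2\sqrt{\alpha}$, the second wave is the exponential one from Theorem~\ref{thmfirst1}(I).
\item For $c\ge 2\sqrt{\alpha}$, the test function $e^{-cz/2}$ gives $\lambda_1\ge c^2/4-\alpha\ge 0$. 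By Theorem A(ii) there is then no minimal wave, which rules out uniqueness.
\end{itemize}
Now combine this with the classification: at most one exponentially decaying wave (Wronskian argument), and exactly one non-$L^1$ wave (your Steps 1 and 3). This leaves infinitely many waves $\sim K\widetilde\sigma_2$. Your argument that $\phi_c^{\max}\notin L^1$ still works, because it only uses the lower barriers for all $A$.

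\textbf{Two smaller points on Step 1.}
\begin{itemize}
\item There is a sign slip. Dividing the equation by $\widetilde\sigma_2$ gives $w''+(c+2\sigma_2)w'+\sigma_2'w=+\widetilde\sigma_2 w^2$, so $1/w\approx C+c^{-1}\int_z^{+\infty}\widetilde\sigma_2$. With your sign, the borderline case would force $1/w<0$.
\item The paper gives a clean rigorous replacement for your ``Riccati'' step. Apply Levinson's theorem to the linear equation with coefficient $a-\psi$; this coefficient has bounded variation because $a$ and $\psi$ are eventually monotone. This gives $\psi\sim K_0\exp\int\sigma_\psi$. Then use the exact identity $\sigma_\psi-\sigma_2=\psi/(c+\beta)$ with $\beta\to 0$. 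Setting $\Psi=\int\psi/(c+\beta)$, one gets $e^{-\Psi}\Psi'=\frac{v}{c+\beta}\widetilde\sigma_2$ with $v\to K_0$. This integrates explicitly from $z$ to $+\infty$ when $\Psi(+\infty)=+\infty$.
\end{itemize}
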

%

\begin{remark}\label{rem2.3}
The assumption (\ref{2eqpb}) includes the following three typical decaying cases
of $a(z)$ and the following decaying estimates hold true.
\begin{enumerate}
\item[${\rm (i)}$] If
$\lim\limits_{z\rightarrow+\infty}za(z)=c$ and
there exist an integer $k\geq1$ and a positive constant $r>c$
such that
\begin{equation}\label{crit+}
\begin{aligned}
a(z)\geq
&c\sum_{j=0}^{k-1}\Big{(}\frac{1}{z}\prod_{i=0}^j\big{(}\ln^i z\big{)}^{-1}\Big{)}
+\frac{r}{z}\prod_{i=1}^k\big{(}\ln^i z\big{)}^{-1},
~~z\gg1,
\end{aligned}
\end{equation}
with  $\ln^{0}z=1$ and $\ln^jz:=\underbrace{\ln\cdot\cdot\cdot\ln}_jz$, then
$$\widetilde{\sigma}_2(z)\sim K\exp\Big(-\frac{1}{c}\int^{z}_{z_0}a(s){\rm d}s\Big)=O\Big(\frac{1}{z\ln z\cdot\cdot\cdot(\ln^kz)^\frac{r}{c}}\Big),\; as \;z\to+\infty;
$$
and
\begin{equation}\label{ouy32v}
\frac{c\widetilde{\sigma}_2(z)}{\int_z^{+\infty} \widetilde{\sigma}_2(s) ds}\geq (r-c)\big(z\ln z\cdot\cdot\cdot(\ln^kz)\big)^{-1},~as~z\to+\infty.
\end{equation}
\end{enumerate}

\begin{enumerate}
\item[${\rm (ii)}$] If
$\varliminf\limits_{z\to+\infty}za(z)=\gamma\in (c,+\infty)$, then
$\widetilde{\sigma}_2(z)=O(z^{-\frac{\gamma}{c}})$ for $z\gg 1$; and
$$ \frac{c\widetilde{\sigma}_2(z)}{\int_z^{+\infty} \widetilde{\sigma}_2(s) ds}\ge \frac{\gamma-c}{z},\; z\gg 1.
$$
\end{enumerate}

\begin{enumerate}
\item[${\rm (iii)}$] If
$\lim\limits_{z\to+\infty}za(z)=+\infty$,
then $\widetilde{\sigma}_2(z)=O\big(z^{-p}\big),~as~ z\to+\infty$, for any $p>1$ and $\lim\limits_{z\to+\infty}z\phi^{\max}_c(z)=+\infty$. Furthermore, if $\lim\limits_{z\to +\infty}\frac{a'(z)}{a^2(z)}=0$, then
$$  \frac{c\widetilde{\sigma}_2(z)}{\int_{z}^{+\infty}\widetilde{\sigma}_2(s){\rm d}s}\sim a(z), \;\;as\; z\to +\infty.$$
\end{enumerate}
\end{remark}
The verification of the estimates stated in Remark \ref{rem2.1} can be found in Section 3.

\begin{remark}
Theorems  \ref{thmfirst1} and \ref{thmfirspk} guarantee  that if  $~\widetilde{\sigma}_2(z)\in L^1([z_0,+\infty))$, then for any $c\in (0,2\sqrt{\alpha})$
 (\ref{Forceddd11}) admits infinitely many ordered forced waves with only one forced wave decaying  exponentially which is also the minimal forced wave, and for any $c>0$ and if $~\widetilde{\sigma}_2(z)\in L^1([z_0,+\infty))$, (\ref{Forceddd11}) admits infinitely many forced waves  decay non-exponentially at $z=+\infty$ and there are  two types  of non-exponential decaying rates of the forced waves. For $c>2\sqrt{\alpha}$ and
 $\widetilde{\sigma}_2(z)\in L^1([z_0,+\infty))$, (\ref{Forceddd11}) has no minimal forced wave and all the forced waves decaying non-exponentially.  Under the additional assumption of (\ref{plh}) only the maximal forced wave $\phi^{max}_c(z)$  decays with the slowest decaying rate (\ref{2eqpb}), all other non-exponential  forced waves  decay like $K\widetilde{\sigma}_2(z)$.
\end{remark}

 Biological interpretation:  The results sated in Theorems 2.1-2.2 indicate that the critical speed $2\sqrt{\alpha}$ governs the existence of exponentially decaying waves; while the convergence or divergence of the integral $\int_{z_0}^{+\infty}\widetilde{\sigma}_2(z){\rm d}z$, which reflects the rate at which $a(z)$ decays to zero, determines the existence of infinitely many non-exponentially decaying waves.

When $c<2\sqrt{\alpha}$ and the resource decays sufficiently fast, the system admits only a unique exponentially decaying forced wave. In this scenario, population density declines sharply outside the core resource region, forming a distinct and sharp distribution boundary. This uniqueness indicates that under predictable environmental change with a well-defined resource boundary, the population evolves a unique optimal migration strategy, which leads to a clear distribution front (e.g., the fixed routes of seasonally migrating species).

If $\int_{z_0}^{+\infty}\widetilde{\sigma}_2(z){\rm d}z<+\infty$, then resource function $a(z)$ decays slowly, corresponding to gradual degradation at the habitat leading edge. In this case, regardless of
$c$, the species can form infinitely many forced waves through diverse spatial strategies. This highlights high ecological adaptability: even under rapid environmental change, as long as resources do not vanish abruptly, the population can sustain migration via flexible distribution patterns.\medskip

This paper is organized as follows. In Section 3, we investigate the existence and the precise  decay rates of all the local positive solutions
of (\ref{bz}). In Section 4, we prove the uniqueness
of the forced wave decaying exponentially and the wave not belonging in  $L^1([z_0,+\infty))$, and complete the proofs of theorems \ref{thmfirst1}-\ref{thmfirspk} by proving the existence/non-existence of several types of  forced waves decaying exponentially  or non-exponentially.

\section{Existence and spatial decay of local positive solutions of (\ref{bz}) }
In this section under the assumption of ({\bf $H_1$}) we aim to prove the existence/nonexistence and to obtain the precise decay rates of all the local positive solutions $\psi(z)$ of (\ref{bz}) at  $z=+\infty$, which play key roles in
our later investigation on the existence/nonexistence and the multiplicity as well as the precise decay rates of all the forced waves $\phi_c(x-ct)$
of (\ref{Forceddd11}).

As discussed in Section 2, under the assumption ({\bf $H_1$}) the decay rates of all the local positive solutions to the nonlinear equation (\ref{bz}) near $z=+\infty$ may have only one type or several types of  possibilities, which
are closely related with the decay rate of $a(z)$,   or precisely speaking, which are  determined by the criteria $\int^{+\infty}_{z_0}\widetilde{\sigma}_2(z){\rm d}z$ is finite/infinite.  By applying ODE asymptotic theories,  the  decay rates of some types (such as the exponential decay)  of  local positive solutions of (\ref{bz})  near $z=+\infty$ are the same as those of  the heterogeneous  linearized system of (\ref{bz}) around $\psi(z)=0$, i.e. $\psi''(z)+c\psi'(z)+a(z)\psi(z)=0$; and it was shown in Section 2 that for the case when $\int^{+\infty}_{z_0}a(z){\rm d}z<+\infty $ neither the heterogeneous  linearized system nor the nonlinear equation (\ref{bz}) has local positive solution decaying non-exponentially near $z=+\infty$. In this section we are more interested in the case when
$\int^{+\infty}_{z_0}a(z){\rm d}z=+\infty $, in such case it is known that the linearized equation has a family of positive solutions decaying non-exponentially and decay like $K\widetilde{\sigma}_2(z)$; however the existence and the non-exponential decay rates of local solutions of the nonlinear equation (\ref{bz}) may  highly depend on the decay rate of $\widetilde{\sigma}_2(z)$ and the effect of nonlinear term of (\ref{bz}), which will be investigated in details in this section.

We begin with establishing  the  existence and the precise decaying rates of a family of local positive solutions of $(\ref{bz})$ which decay exponentially at $z=+\infty$, which are stated as follows:
\begin{lemma}\label{decavr}
Assume that ({\bf $H_1$}) is satisfied for some $c>0$. Then for large enough $z_0\gg 1$ and any given $K_1>0$, (\ref{bz}) admits a unique local positive solution $\psi(z)\in
C^2([z_0,+\infty))$ decaying exponentially and satisfying
\begin{equation}\label{xcr}\psi(z)\sim K_1\exp\Big{(}\int_{z_0}^z\sigma_1(s){\rm d}s\Big{)},~as~z\rightarrow+\infty,
\end{equation}
with $\sigma_1(s)=\frac{-c-\sqrt{c^2-4a(s)}}{2}$, and all the local positive solutions decaying exponentially
must satisfy (\ref{xcr}) for some  $K_1>0$. \end{lemma}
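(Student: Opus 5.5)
The plan is to construct the solutions by a fixed-point argument on the linearized-diagonal system (\ref{eig111}), and then to prove the converse, that every exponentially decaying solution has this form. Write $\rho_j(z)=\exp\big(\int_{z_0}^z\sigma_j(s){\rm d}s\big)$, $j=1,2$. These are the two positive solutions of the diagonal part $P'=\Lambda(z)P$. By (\ref{eig1}) we have $\sigma_1\le -c/2$ and $\sigma_1-\sigma_2\le -c/2$ for $z\ge z_0\gg1$, so there is a uniform exponential dichotomy: $\rho_1(z)/\rho_1(s)\le e^{-c(z-s)/2}$ for $z\ge s$, and $\rho_2$ is (relatively) dominant. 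We also use that $R(z):=T^{-1}T'$ satisfies $\int_{z_0}^{\infty}|R|<\infty$, because $a'\le0$ eventually, and that $\widetilde B(P,z)P=O(|P|^2)$.

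For existence, fix $K_1>0$ and look for $P=(P_1,P_2)^\top$ with $P=\rho_1(z)(K_1+w_1(z),\,w_2(z))^\top$. Substituting into (\ref{eig111}) and integrating the first component from $z_0$, and the second (dominant) component backward from $+\infty$, gives
\begin{align*}
w_1(z)&=\int_{z_0}^{z}\big[(R+\widetilde B)P\big]_1(s)\,\rho_1(s)^{-1}{\rm d}s,\\
w_2(z)&=-\int_{z}^{+\infty}\frac{\rho_2(z)\rho_1(s)}{\rho_1(z)\rho_2(s)}\,\big[(R+\widetilde B)P\big]_2(s)\,\rho_1(s)^{-1}{\rm d}s.
\end{align*}
The kernel in the second integral is bounded by $e^{-c(s-z)/2}$. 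We work in the Banach space of bounded continuous $w$ on $[z_0,\infty)$. Because $\int_{z_0}^\infty|R|$ is small for $z_0$ large and the quadratic term contributes a factor $\rho_1(s)\le e^{-c(s-z_0)/2}$, the map is a contraction on a small ball once $z_0$ is large (depending on $K_1$). Actually, to make $w_1(+\infty)=0$ rather than just bounded, I would integrate the first component from $+\infty$ instead, i.e.\ use $w_1(z)=-\int_z^\infty\cdots$. This is legitimate because the integrand is $O(|R(s)|+\rho_1(s))$, which is integrable. With this choice $P_1\sim K_1\rho_1$ and $P_2=o(\rho_1)$. Going back through $\mathbf X=TP$, this gives $\psi=P_1+P_2\sim K_1\rho_1$, which is (\ref{xcr}), and $\psi'=\sigma_1P_1+\sigma_2P_2<0$. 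Positivity for $z\ge z_0$ follows from the smallness of $w$. Uniqueness for given $K_1$ comes from the same contraction: any solution with $\psi\sim K_1\rho_1$ yields, through the variation of constants formula, a fixed point of this map in the ball.

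For the converse, let $\psi$ be any local positive solution decaying exponentially, say $\psi\le Ce^{-\varepsilon z}$. Then $\psi\in L^1$, so the full coefficient matrix $\Lambda+R+\widetilde B(P,z)$ is an $L^1$ perturbation of $\Lambda$. Since $\int|\Lambda'|<\infty$ and the dichotomy condition holds, Levinson's theorem \cite[Chapter~IV,~Theorem 11]{[CWA]}, as quoted in Section 2, applied to the linear system with this frozen coefficient shows that the solution behaves like either $K\rho_1(1+o(1))e_1$ or $K\rho_2(1+o(1))e_2$. The second alternative is impossible, because $\rho_2(z)=\exp\big(-\int_{z_0}^z (a/c+O(a^2))\big)$ decays subexponentially, since $a(+\infty)=0$. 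This would contradict $\psi\le Ce^{-\varepsilon z}$. Hence $\psi\sim K_1\rho_1$ for some $K_1>0$, where $K_1>0$ because $\psi>0$.

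The main obstacle is bookkeeping rather than concept. The delicate points are: choosing $z_0$ uniformly so that both $\int_{z_0}^\infty|R|$ and the nonlinear contribution are small; integrating the recessive component from $+\infty$ so that the limit $K_1$ is exactly attained; and checking that Levinson's theorem applies to the nonlinear system once the nonlinearity is treated as a known $L^1$ coefficient.
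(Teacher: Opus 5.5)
Your approach matches the paper's, which only invokes the diagonalization $\mathbf X=TP$, the Levinson-type theorem (Chapter IV, Theorem 11 of the cited monograph) and a ``standard perturbation argument''. Your converse is correct: you freeze the $O(\psi)\in L^1$ nonlinear coefficient, apply Levinson, and exclude the $\rho_2$ mode because it decays subexponentially. It also has the small advantage of treating $\int a<\infty$ and $\int a=\infty$ at once.

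\textbf{The gap is in the existence step.} With $\rho_1(z_0)=1$, the nonlinear term enters the first component as $-\psi^2/(\sigma_2-\sigma_1)$, which has a fixed sign. So it adds to $w_1(z_0)$ at least $\frac{1}{c}\int_{z_0}^{\infty}\rho_1(s)\,(K_1+w_1+w_2)^2\,{\rm d}s\ge \frac{(K_1-2r)^2}{c^2}$ on the ball of radius $r$, using $\sigma_1\ge -c$ and $\sigma_2-\sigma_1\le c$. Enlarging $z_0$ does not shrink this, because it moves the normalization point along with the left endpoint. Hence your claim that ``the map is a contraction on a small ball once $z_0$ is large (depending on $K_1$)'' is false for $K_1\gtrsim c^2$. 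With $r=K_1/4$, which positivity essentially forces, the self-map property already fails for $K_1>c^2$.

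This is not just bookkeeping. If the left endpoint equals the normalization point, the assertion itself fails for large $K_1$. For the limiting equation $\psi''+c\psi'=\psi^2$, the solution with $\psi\sim K_1e^{-c(z-z_0)}$ is the translate $\Phi(z-z_0-\frac1c\ln K_1)$ of a profile that blows up at a finite point to the left. So for large $K_1$ it is not defined on $[z_0,\infty)$. Keller--Osserman barriers on $[z_0,z_0+2]$ show the same cap on $K_1$ when $0<a\le\eta\ll1$ on $[z_0,\infty)$. The paper's one-line citation passes over this point too.

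\textbf{The fix.} Fix a reference point $z_*$ in $\rho_j(z)=\exp(\int_{z_*}^z\sigma_j)$ and run your contraction on $[z_1,\infty)$ with $z_1=z_1(K_1)\ge z_*$ large, in the ball $\|w\|_\infty\le K_1/4$. Then:
\begin{itemize}
\item the quadratic contribution is $O(K_1^2\rho_1(z_1))$ and its Lipschitz constant is $O(K_1\rho_1(z_1))$;
\item the $R$-contribution is $O(K_1\int_{z_1}^\infty|R|)$;
\item all three tend to $0$ as $z_1\to\infty$.
\end{itemize}
``Local'' must be read as ``on some half-line''. Positivity and $\psi'<0$ then follow as you say.

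For uniqueness, you must also check that an arbitrary solution with $\psi\sim K_1\rho_1$ has $P_2/\rho_2\to0$. Otherwise the dominant component is $\rho_2(z)\big[C-\int_z^\infty\rho_2^{-1}f\big]$ with a possibly nonzero free constant $C$, and the solution is not a fixed point of your map. Your converse supplies this: the coefficient of the $\rho_2$ mode vanishes, so $P_2=o(\rho_1)$. The contraction on a far enough half-line plus ODE uniqueness then finishes the argument.
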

\begin{proof}
If $\int_{z_0}^{+\infty}a(z){\rm d}z<+\infty$,
by applying more general ODE asymptotic theories (see {\cite[Chapter~IV,~Theorem 1]{[CWA]}}), it follows that (\ref{bz}) admits a family of local positive solutions
decaying exponentially and satisfying (\ref{expdx}), and
there exists no local positive solution decaying non-exponentially as $z\to+\infty$, in such case $\exp(\int_{z_0}^z\sigma_1(s){\rm d}s)\sim A{\rm e}^{-cz}$ as $z\to+\infty$ for some $A>0$, which proves (\ref{xcr}).

If $\int_{z_0}^{+\infty}a(z){\rm d}z=+\infty$,
using the fact that
$a(+\infty)=0$ and $a'(z)\leq0$ for $z>z_0\gg1$,  we have
$\int_{z_0}^{+\infty}|a'(z)|{\rm d}z<+\infty$,
then by applying more general ODE asymptotic theories ({see \cite[Chapter~IV,~Theorem 11]{[CWA]}}) (or see
the sketched proof  of (\ref{ogf0i}) in Section 2),
it follows that for each given $K_1>0$ the equation
(\ref{bz}) has a unique local positive solution $\psi(z)$ satisfying
\begin{equation*}\label{asyconvtuy}
\begin{aligned}
\psi(z)\sim K_1\exp\Big(\int_{z_0}^z\sigma_1(s){\rm d}s\Big)=o\big{(}{\rm e}^{-(c-\epsilon)z}\big{)},~{\rm as}~z\to+\infty,
\end{aligned}
\end{equation*}
for any given small $\epsilon>0$, where $\sigma_1(z)=\frac{-c-\sqrt{c^2-4a(z)}}{2}\in (-c+\frac{a(z)}{c}, -c+\epsilon) $ for $z\gg 1$,
and there exist no other types of local positive solutions decaying exponentially at $z=+\infty$. This completes the proof.
\end{proof}

For the case when $\int_{z_0}^{+\infty}a(z){\rm d}z=+\infty$, as discussed in Section 2
$(\ref{bz})$ may admit  local positive solutions  decaying to zero non-exponentially.
Before investigating the existence and all the possible non-exponential decays of  local positive solutions of (\ref{bz}), we first prove some preliminary estimates.
\begin{prop}\label{lepro}
Assume that ({\bf $H_1$}) is satisfied for some $c>0$,
if (\ref{bz}) admits a local positive solution $\psi(z)$ decaying non-exponentially at $z=+\infty$, then
\begin{equation}\label{decze}
 0<\psi(z)<a(z),\;\psi'(z)<0,\;for~z\gg 1;\; {\rm and}\;\; \lim_{z\rightarrow+\infty}\frac{\psi'(z)}{\psi(z)}=\lim_{z\to+\infty}\frac{\psi''(z)}{\psi'(z)}=0.
\end{equation}
{\rm The proof of Proposition \ref{lepro} is left in the Appendix B.}\\
\end{prop}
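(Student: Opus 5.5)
We prove Proposition \ref{lepro} in four steps: first $\psi'<0$ near $+\infty$, then the vanishing of $\psi'/\psi$, then the vanishing of $\psi''/\psi'$, and finally the comparison $\psi<a$, which we expect to be the main obstacle.

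\emph{Monotonicity.} That $\psi'(z)<0$ for $z\gg1$ follows as in Lemma \ref{negnega1}. Since $\psi>0$ and $\psi(+\infty)=0$, $\psi$ cannot be eventually increasing. An interior local minimum $z_1$ with $\psi(z_1)<a(z_1)$ is impossible, since there $\psi''(z_1)=-\psi(z_1)(a(z_1)-\psi(z_1))<0$. So consider a sequence of local minima with $\psi\ge a$ at them. Combine this with $a'\le0$ and $\psi(+\infty)=0$; a standard ODE argument, comparing successive critical points, then excludes infinitely many oscillations. Hence $\psi$ is eventually monotone, and therefore decreasing. Moreover $\psi'(+\infty)=0$, by integrating the equation: $\psi'$ is bounded and $\psi'(z)\to0$ along a sequence.

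\emph{First limit.} Set $w=-\psi'/\psi>0$. Then
\[
w'=w^2-cw+a(z)-\psi(z),
\]
a Riccati equation whose coefficients $a-\psi$ tend to $0$. Its limiting autonomous equation $w'=w^2-cw$ has equilibria $0$ and $c$.
\begin{itemize}
\item Suppose $w$ stays above $c+\delta$ for large $z$. Then $w'\ge w^2-cw-o(1)$ forces blow-up, contradicting global existence.
\item Phase-line analysis with the small perturbation $a-\psi$ shows that $w$ has a limit $\ell\in\{0,c\}$.
\end{itemize}
If $\ell=c$, then $\psi$ decays like $e^{-(c-\epsilon)z}$, i.e.\ exponentially, contradicting the hypothesis. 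Hence $w\to0$, i.e.\ $\psi'/\psi\to0$.

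\emph{Second limit.} From the equation,
\[
\frac{\psi''}{\psi'}=-c-\frac{\psi(a-\psi)}{\psi'}=-c+\frac{a-\psi}{w}.
\]
This expression is not obviously small, so we work with $v=\psi''/\psi'$ directly. Differentiating the equation gives $\psi'''+c\psi''+(a-2\psi)\psi'+a'\psi=0$, which yields a Riccati-type equation for $v$. Alternatively, note $v=w-w'/w$, so it suffices to show $w'/w\to0$. From the Riccati equation, $w'/w=w-c+(a-\psi)/w$; since $w\to0$, we need $(a-\psi)/w\to c$. This is exactly the statement that $w$ tracks the slow equilibrium $w\approx(a-\psi)/c$ of the Riccati equation, a slow-manifold property. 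We will prove it by a barrier argument: if $w'$ is not asymptotically negligible relative to $w$, then $w$ is driven by the $-cw$ term away from the slow branch, either to negative values or toward $c$, a contradiction. Once $(a-\psi)/w\to c$ is established, we get $w'/w=w-c+c+o(1)\to0$ and hence $\psi''/\psi'\to0$.

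\emph{The comparison $\psi<a$ (main obstacle).} Suppose $\psi(z_n)\ge a(z_n)$ along a sequence $z_n\to+\infty$.
\begin{itemize}
\item \emph{Eventual case.} If $\psi\ge a$ holds eventually, then $\psi''+c\psi'=\psi(\psi-a)\ge0$, so $(e^{cz}\psi')'\ge0$. Combined with $\psi'<0$ and $\psi'(+\infty)=0$, integration gives $-\psi'(z)\le Ce^{-cz}$. Then $\psi(z)=\int_z^\infty(-\psi')\le Ce^{-cz}$, which is exponential decay, a contradiction.
\item \emph{Crossing case.} If $\psi-a$ changes sign infinitely often, we argue at crossing points where $\psi=a$ and $\psi'\ge a'$. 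Set $h=\psi-a$. On an interval where $h>0$, the above inequality $(e^{cz}\psi')'\ge0$ still holds. This makes $\psi'$ nondecreasing up to an exponential weight, while $a'\le0$. We use it to show that once $h>0$ the function $h$ cannot return to zero, so $h>0$ holds eventually and we are back in the eventual case.
\end{itemize}
Controlling this crossing case, namely ruling out re-entry using only $a'\le0$ rather than any convexity of $a$, is the delicate point. If the direct argument fails, we would first try a maximum-principle comparison of $\psi$ with $\lambda a$ on $[z_n,+\infty)$.
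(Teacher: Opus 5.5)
Your first two steps follow the paper: monotonicity comes from the proof of Lemma \ref{negnega1}, and $\psi'/\psi\to0$ comes from the Riccati equation $\theta'+\theta^2+c\theta=\psi-a$. That equation forces the limit into $\{0,-c\}$, and non-exponential decay rules out $-c$.

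Your Step 4, however, is both unnecessary and wrong in the crossing case. The paper gets $\psi<a$ in one line from the second limit. The equation reads $\psi'\,(c+\psi''/\psi')=\psi(\psi-a)$, and $\psi'<0$ together with $c+\psi''/\psi'\to c>0$ forces $\psi<a$. You wrote the same identity yourself as $\psi''/\psi'=-c+(a-\psi)/w$. So $(a-\psi)/w\to c$ with $w>0$ already gives $a-\psi>0$. Your crossing argument cannot work as stated. On an interval where $h=\psi-a>0$ you only know that $e^{cz}\psi'$ is nondecreasing, and nothing stops $h$ from returning to zero: where $a$ is flat, $h'=\psi'<0$.

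The real gap is Step 3, and the mechanism you propose runs the wrong way. In $w'=w^2-cw+(a-\psi)$ the term $-cw$ \emph{attracts} $w$ to the slow branch $w\approx(a-\psi)/c$. A lag behind that branch therefore just relaxes at rate $c$ and gives no contradiction. Equivalently, $v=\psi''/\psi'$ solves $v'+v^2+cv=2\psi-a-a'\psi/\psi'$, and the term $a'\psi/\psi'$ need not tend to $0$, because $(H_1)$ gives no control of $a'$ relative to $\psi'/\psi$. The paper also only asserts this step (``by the similar argument''), and it fails under $(H_1)$ alone.

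Here is the construction. Take $a$ nonincreasing and $C^1$, equal to $A_k$ on long intervals, dropping from $A_{k-1}$ to $A_k=\epsilon A_{k-1}$ over lengths $\ll 1/c$. Choose the interval lengths so that $\widetilde{\sigma}_2\in L^1([z_0,+\infty))$, so non-exponential solutions exist. Just after the $k$-th drop, $\psi$ and $\psi'$ are essentially unchanged. Hence $w\approx(A_{k-1}-\psi)/c$ while $a-\psi=A_k-\psi$, which gives $\psi''/\psi'\le -c(1-\epsilon)+o(1)$ there for every non-exponentially decaying solution. Solutions near the maximal one enter the $k$-th step close to the stable manifold of the saddle $(A_k,0)$. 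For them one finds $\psi\approx A_k\bigl(1+(A_{k-1}-A_k)/c^2\bigr)>a$ just after the drop. So neither the second limit nor $\psi<a$ follows from $(H_1)$ alone; any proof needs an additional slow-variation hypothesis on $a$.
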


\begin{thm}\label{deca}
Assume that  ({\bf $H_1$}) is satisfied for some $c>0$.
If  $a(z)$ satisfies
\begin{equation}\label{asuijgf}
\displaystyle\int^{+\infty}_{z_0}\widetilde{\sigma}_2(z){\rm d}z=+\infty; \;\widetilde{\sigma}_2(z): = \exp\left(\int_{z_0}^z \frac{-c+\sqrt{c^2 - 4a(s)}}{2} \, {\rm d}s\right),~ z\ge z_0\gg 1,
\end{equation}
then (\ref{bz}) has no local positive solution
decaying non-exponentially at $z=+\infty$.
\end{thm}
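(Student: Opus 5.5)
We argue by contradiction. Suppose $\psi$ is a local positive solution of (\ref{bz}) decaying non-exponentially at $z=+\infty$. By Proposition \ref{lepro}, for $z\ge z_1\gg1$ we have $0<\psi(z)<a(z)$, $\psi'(z)<0$, and $\psi'/\psi\to0$. The idea is to compare $\psi$ with the non-exponential solution $\widetilde{\sigma}_2$ of the linearized problem. Since $\psi$ solves (\ref{bz}), it satisfies
\[
\psi''+c\psi'+a(z)\psi=\psi^2>0 ,
\]
so $\psi$ is a strict subsolution of the linearized operator $L\phi:=\phi''+c\phi'+a(z)\phi$. The goal is to show that this forcing $\psi^2$ is too large to be compatible with $\int^{+\infty}\widetilde\sigma_2=+\infty$.

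\textbf{Step 1 (Riccati reduction).} Set $w:=-\psi'/\psi>0$. Then $w\to0$, and
\[
w'=w^2-cw+a(z)-\psi .
\]
Write $\sigma_2=-w_2$, where $w_2>0$ is the small root of $w^2-cw+a=0$. The function $w_2$ is non-increasing because $a'\le0$. The difference $v:=w-w_2$ then satisfies
\[
v'=(w+w_2-c)\,v-\psi-w_2' .
\]
Since $w+w_2-c\to-c$, this is a strongly stable linear equation with forcing $-\psi-w_2'$. Solving it by variation of constants from a point $z_1$ gives
\[
v(z)\approx -\int_{z_1}^{z}e^{-c(z-s)}\bigl(\psi(s)+w_2'(s)\bigr)\,{\rm d}s \;+\; \text{(exponentially small)} .
\]
The term $w_2'$ has finite total variation, so $\int|w_2'|<\infty$. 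Using $\psi'/\psi\to0$ (so $\psi$ is slowly varying), I expect
\[
v(z)=-\frac{\psi(z)}{c}\bigl(1+o(1)\bigr)+\rho(z),\qquad \int^{+\infty}|\rho|<\infty .
\]

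\textbf{Step 2 (integrate).} Integrating $-\psi'/\psi=w=w_2+v$ gives
\[
\psi(z)=C\,\widetilde{\sigma}_2(z)\exp\Bigl(\int_{z_1}^{z}\frac{\psi(s)}{c}\bigl(1+o(1)\bigr)\,{\rm d}s\Bigr),\qquad C>0 .
\]
Define $\Phi(z):=\int_{z_1}^{z}\psi$. For large $z$ this yields $\Phi'\ge C'\,\widetilde{\sigma}_2\,e^{(1-\epsilon)\Phi/c}$, and hence
\[
\frac{{\rm d}}{{\rm d}z}\Bigl(-e^{-(1-\epsilon)\Phi/c}\Bigr)\ge C''\,\widetilde{\sigma}_2(z) .
\]
Integrating from $z_1$ to $+\infty$, the left side is bounded by $c/(1-\epsilon)$, while the right side equals $C''\int^{+\infty}\widetilde{\sigma}_2=+\infty$. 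This is a contradiction. The same inequality is exactly the mechanism that, under (\ref{plh}), produces the blow-up profile $\phi^{\max}\sim c\widetilde\sigma_2/\int_z^\infty\widetilde\sigma_2$, which serves as a consistency check.

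\textbf{Main obstacle.} The hard step is making Step 1 rigorous, namely showing that the error terms contribute only a bounded factor and do not cancel the $\psi/c$ term. This requires a lower bound of the form $v\le -(1-\epsilon)\psi/c+\rho$ with $\rho\in L^1$; only this one-sided inequality is needed.

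To get it, I would use the monotonicity $\psi'<0$. It gives $\int_{z_1}^z e^{-c(z-s)}\psi(s)\,{\rm d}s\ge\psi(z)(1-e^{-c(z-z_1)})/c$. The $w_2'$ term is integrable after convolution with $e^{-c(\cdot)}$, because Young's inequality preserves $L^1$. The coefficient $w+w_2-c$ is only $-c+o(1)$, and its $o(1)$ correction I would absorb into the $\epsilon$.

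If the convolution estimate proves awkward, an alternative is a direct Wronskian argument. Compute
\[
\bigl(e^{cz}(\psi'\widetilde\sigma_2-\psi\widetilde\sigma_2')\bigr)'=e^{cz}\,\widetilde\sigma_2\,\psi^2+(\text{error from }\widetilde\sigma_2\text{ not solving }L\phi=0),
\]
and reach the same contradiction.
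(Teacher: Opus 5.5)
Your proof is correct. It ends the same way as the paper's: both reach $\psi\gtrsim\widetilde{\sigma}_2\exp\bigl(\int\psi/c\bigr)$ and then integrate $e^{-\Phi/c}\Phi'$ to contradict $\int^{+\infty}\widetilde{\sigma}_2=+\infty$.

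\textbf{Where you differ.} The routes part ways in how that representation is obtained.
\begin{itemize}
\item The paper views $\psi''+c\psi'+(a-\psi)\psi=0$ as a linear equation with coefficient $a-\psi$. It invokes the Levinson-type asymptotic theorem cited from Coddington--Levinson to get $\psi\sim K_0\exp\bigl(\int\sigma_\psi\bigr)$. It then uses the exact identity $\sigma_\psi-\sigma_2=\psi/(c+b(z))$ with $b\to0^-$.
\item You work with the Riccati equation for $w=-\psi'/\psi$ and a variation-of-constants estimate for $v=w-w_2$. You prove only the one-sided bound you need.
\end{itemize}

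\textbf{Your estimate holds as stated, and you do not need the $\epsilon$.} Since $w>0$ and $w_2>0$, the coefficient $p=w+w_2-c$ satisfies $p>-c$. Hence $e^{\int_s^z p}\ge e^{-c(z-s)}$, and monotonicity of $\psi$ gives $v\le-\psi/c+\rho$ with $\rho\in L^1$. The three error terms are all integrable: the initial-value term, the term $\psi(z)e^{-c(z-z_1)}/c$, and the convolution of $|w_2'|$ with $e^{-(c-\eta)(\cdot)}$. This yields $\psi\ge C\,\widetilde{\sigma}_2\,e^{\Phi/c}$ directly, and the contradiction follows without first showing $\Phi(+\infty)=+\infty$. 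There is one trivial slip: with your normalization the integrated left-hand side is bounded by $1$, not by $c/(1-\epsilon)$. Either way it is bounded.

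\textbf{What each route buys.} Your route is self-contained. It uses only $\psi'<0$, $\psi'/\psi\to0$ and $\int|w_2'|<\infty$ (from $a'\le0$). It avoids a black-box theorem whose hypotheses the paper does not check explicitly: bounded variation of $a-\psi$ (true, since both are eventually monotone) and the dichotomy between the two roots. The paper's route gives the two-sided representation $\psi=v\,\widetilde{\sigma}_2\exp\bigl(\int\psi/(c+b)\bigr)$ with $v\to K_0>0$. The paper reuses this verbatim in Theorem \ref{damvt2}(ii) to get the sharp asymptotics $\psi_m\sim c\widetilde{\sigma}_2/\int_z^{+\infty}\widetilde{\sigma}_2$. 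Your one-sided inequality would not give that on its own; you would need the matching reverse convolution estimate, using slow variation of $\psi$.
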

\begin{proof}
{By contradiction, let $\psi(z)$ be a local classical positive solution of (\ref{bz}) defined on $[z_0,+\infty)$, which decays non-exponentially at $z = +\infty $. Note that $\psi(z)\in C^3([z_0,+\infty))$ for $z_0\gg 1$. By Proposition \ref{lepro}, we know that
\begin{equation}\label{eq:3.2}
0 < \psi(z) < a(z) \quad \text{and} \quad \psi'(z) < 0,~{\rm for}~ z \geq z_0,
\end{equation}
then by applying ODE asymptotic estimates as stated  in Section 2, $\psi(z)$ must satisfy
\begin{equation}\label{eq:3.3}
\begin{array}{l}
\psi(z) \sim K_0 \widetilde{\sigma}_{\psi}(z),\quad \text{as } z \to +\infty,\;\text{ for some constant} ~K_0>0;\\
{\rm with}~\widetilde{\sigma}_{\psi}(z):=\exp\left(\int_{z_0}^z \sigma_{\psi}(s) \, {\rm d} s\right),\;\sigma_\psi(z):= \frac{-c + \sqrt{c^2 - 4(a(z) - \psi(z))}}{2},\;z\ge z_0;
\end{array}
\end{equation}
which implies that $\psi(z)$ can be expressed as
\begin{equation}\label{eq:3.4}
\psi(z) = v(z) \widetilde{\sigma}_\psi(z), \quad \text{for } z \geq z_0\gg1,
\end{equation}
with a $C^3$ function $v(z)$ satisfying $v(z) \to K_0 > 0$  as $z \to +\infty$.
Note that
\begin{align}
\sigma_{\psi}(z)-\sigma_2(z) &= \frac{2\psi(z)}{\sqrt{c^2 - 4(a(z) - \psi(z))}+ \sqrt{c^2 - 4a(z)}},~ z \geq z_0, \notag \\
&= \frac{\psi(z)}{c + b(z)} \geq \frac{\psi(z)}{c}, \quad \text{for } z \geq z_0, \label{eq:3.6}
\end{align}
with $b(z) \to 0^- $  as $z \to +\infty $ .
By \eqref{eq:3.6}, \eqref{eq:3.4} can be rewritten as the following integral equation of $\psi(z)$
\begin{align}\label{eq:3.7}
\psi(z) = v(z) \widetilde{\sigma}_2(z) {\rm e}^{\int_{z_0}^z \frac{\psi(s)}{c + b(s)} {\rm d}s}
\geq v(z) \widetilde{\sigma}_2(z), \quad z \geq z_0,
\end{align}
with $v(z) \to K_0>0$  and $b(z) \to 0^- $  as $z \to +\infty$ ,
which with the assumption \eqref{asuijgf} ($\int_{z_0}^{+\infty}\widetilde{\sigma}_2(z){\rm d}z=+\infty)$ further implies
\begin{equation}\label{eq:la}
\int_{z_0}^{+\infty} \psi(z) {\rm d}z = +\infty.
\end{equation}
Let
\begin{equation}\label{eq:vy}
\psi_1(z) = \frac{\psi(z)}{c + b(z)}\;\;{\rm and}\;\;\Psi_1(z) = \int_{z_0}^z \psi_1(s) {\rm d}s,~z\geq z_0\gg1.
\end{equation}
Note that  \eqref{eq:la}, \eqref{eq:vy} and the fact $b(z) \to 0$  as $z \to +\infty $
imply $\psi_1(z) \notin L^1([z_0, +\infty))$,} thus
$\Psi_1(+ \infty) = +\infty $, and the nonlinear integral equation  \eqref{eq:3.7} can be rewritten as the following   nonlinear differential  equation of $\Psi_1(z)$:
\begin{equation}\label{eq:3.10}
\exp(-\Psi_1(z)) \Psi'_1(z) = \frac{v(z)}{c+b(z)} \widetilde{\sigma}_2(z), \quad z \geq z_0,
\end{equation}
with $b(z) \to0^-$ and  $v(z) \to K_0 > 0$  as $z \to +\infty$.

Integrating the equation \eqref{eq:3.10} from $z_0$  to $z $, we have
\begin{equation}\label{eq:3.11}
1 - \exp(-\Psi_1(z)) = \int_{z_0}^{z} \frac{v(s)}{c+b(s)} \widetilde{\sigma}_2(s) \, {\rm d}s, \quad z \geq z_0.
\end{equation}
Using $\Psi_1(+\infty)=+\infty$ and
letting  $z \to +\infty$ in \eqref{eq:3.11}, we have
\begin{equation}\label{eq:3.12}
1 = \int_{z_0}^{+\infty} \frac{v(s)}{c+b(s)} \widetilde{\sigma}_2(s) \, {\rm d}s,
\end{equation}
which  contradicts with the assumption
$\int_{z_0}^{+\infty} \widetilde{\sigma}_2(s) \, {\rm d}s = +\infty$;  noting that $\frac{v(z)}{c+b(s)}\to  \frac{K_0}{c}> 0$  as $z\to +\infty$.
\end{proof}

\begin{proposition}\label{prop3.1}
The assumption $(\ref{asuijgf})$ includes the three typical cases as stated in the Remark  \ref{rem2.1}.
\end{proposition}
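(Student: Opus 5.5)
The plan is to reduce everything to the elementary comparison
\[
\sigma_2(z)=-\frac{a(z)}{c}-\frac{2a^2(z)}{c^3}+o(a^2(z)),
\qquad\text{so}\qquad
\sigma_2(z)\ge -\frac{a(z)}{c}-C a^2(z),\quad z\ge z_0\gg1,
\]
for some $C>0$, which follows from (\ref{eig1}). This gives
\[
\widetilde{\sigma}_2(z)\ge \exp\Big(-\frac1c\int_{z_0}^z a(s)\,{\rm d}s - C\int_{z_0}^z a^2(s)\,{\rm d}s\Big).
\]
In cases (ii) and (iii) of Remark \ref{rem2.1} we have $a(z)=O(1/z)$, so $a^2\in L^1([z_0,+\infty))$. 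Hence $\widetilde{\sigma}_2(z)\ge K\exp\big(-\frac1c\int_{z_0}^z a\big)$ for some $K>0$, and it suffices to show that $\exp\big(-\frac1c\int_{z_0}^z a(s)\,{\rm d}s\big)\notin L^1$. Case (i) needs no such estimate: $\int a<\infty$ gives $\widetilde{\sigma}_2(z)\ge \exp\big(\int_{z_0}^z\sigma_2\big)\ge e^{-M}>0$, because $|\sigma_2|\le 2a/c$ for $z\gg1$ is integrable. So $\widetilde{\sigma}_2$ is bounded below by a positive constant and its integral diverges.

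For case (ii), choose $\theta<c$ with $za(z)\le\theta$ for $z\ge z_1$. Then $\frac1c\int_{z_1}^z a\le \frac{\theta}{c}\ln(z/z_1)$, so $\exp\big(-\frac1c\int a\big)\ge K z^{-\theta/c}$ with $\theta/c<1$. This is not integrable, which finishes the case.

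For case (iii), use (\ref{crit-}) and integrate term by term. The key identity is
\[
\frac{{\rm d}}{{\rm d}z}\ln^{j+1}z=\frac1z\prod_{i=1}^{j}(\ln^i z)^{-1}.
\]
With it, the $j$-th summand $c\,\frac1z\prod_{i=0}^j(\ln^iz)^{-1}$ integrates to $c\ln^{j+1}z+$ const, and the last term integrates to $r\ln^{k+1}z+$ const. It follows that
\[
\exp\Big(-\frac1c\int_{z_0}^z a\Big)\ge K\Big(z\ln z\cdots\ln^{k-1}z\,(\ln^k z)^{r/c}\Big)^{-1}.
\]
Since $r/c\le1$, the substitution $w=\ln^k z$ turns its integral into $\int^{\infty} w^{-r/c}\,{\rm d}w=+\infty$, which proves divergence.

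The main point to get right is the indexing in (iii): the $j=0$ term $c/z$ must yield $\ln z$, the $j$-th term must yield $\ln^{j+1}z$, and the $r$-term must yield $\ln^{k+1}z$ as the antiderivative. Everything else in the argument is routine.
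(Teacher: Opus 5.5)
Your proposal is correct and follows essentially the same route as the paper. In each case you bound $\widetilde{\sigma}_2$ below by a positive constant (case (i)) or by a multiple of $\exp\big(-\frac{1}{c}\int_{z_0}^z a(s)\,{\rm d}s\big)$, and then use the power bound (case (ii)) or the iterated-logarithm antiderivative (case (iii)); the only differences are cosmetic: you keep $r$ and use $r/c\le 1$ at the end where the paper first replaces $r$ by $c$, and you state explicitly the justification $a^2\in L^1([z_0,+\infty))$ for the comparison with $\exp\big(-\frac{1}{c}\int a\big)$, which the paper leaves implicit.
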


\begin{proof}
Note that
\begin{equation}\label{eig9}
\sigma_2(z) = -\frac{a(z)}{c} - \frac{2a^2(z)}{c^3} + o(a(z)), \quad z \ge z_0 \gg 1,
\end{equation}

For the case (i): $\displaystyle\int_{z_0}^{+\infty}a(z){\rm d}z<+\infty$; it is easy to see that
$|\sigma_2(z)|<\frac{2}{c}a(z)$ for $z\gg1$, thus   $\widetilde{a}(z)=\exp\big(\frac{-1}{c}\int_{z_0}^za(s){\rm d}s\big)$, $\widetilde{\sigma}_2(z)\notin L^1([z_0,+\infty))$.

For the case (ii): $\displaystyle\int_{z_0}^{+\infty}a(z){\rm d}z=+\infty$ and $\varlimsup\limits_{z\rightarrow+\infty}za(z)< c$; under the assumption $a(z)\notin L^1([z_0,+\infty)$ and $\varlimsup\limits_{z\rightarrow+\infty}za(z)< c$, by (\ref{eig9}), obviously,
$\widetilde{\sigma}_2(z)\sim K\widetilde{a}(z)=K\exp\big(-\frac{1}{c}\int^z_{z_0}a(s){\rm d}s\big)$ as $z\to+\infty$.
Note that $za(z) \leq(1-\delta_1)c$ for some $\delta_1\in (0,1)$ and $z\gg1$, then
$\widetilde{\sigma}_2(z)\sim K_1\exp\big(\frac{-1}{c}\int_{z_0}^za(s){\rm d}s\big)\geq K_2z^{\delta_1-1}$ for~$z\gg1$ and some positive constants $K_1,K_2$, thus
$\widetilde{\sigma}_2(z)\notin L^1([z_0,+\infty))$.

For the case (iii) in Remark \ref{rem2.1}, only note that
\begin{equation*}
\begin{array}{l}
a(z)\le c\Big(\frac{1}{z}+\frac{1}{z\ln z}_\cdot\cdot\cdot+\frac{1}{z\ln z\cdot\cdot\cdot\ln^kz}\Big)=c\displaystyle \frac{{\rm d}}{{\rm d}z}\Big(\ln z+\ln^2z+\cdot\cdot\cdot+\ln^{k}z+\ln^{k+1}z\Big)\\
=c\displaystyle \frac{{\rm d}}{{\rm d}z}\Big(\ln |z\ln z\cdot\cdot\cdot\ln^{k-1}z(\ln^kz)|\Big),~~z\gg1,
\end{array}
\end{equation*}
and $\widetilde{\sigma}_2(z)\sim K\exp\big(-\frac{1}{c}\int^z_{z_0}a(s){\rm d}s\big)$ as $z\to+\infty$, and
$$\exp\Big(-\frac{1}{c}\int^z_{z_0}a(s){\rm d}s\Big)\geq \frac{M_1}{|z\ln z\cdot\cdot\cdot (\ln^kz)|}\notin L^1([z_0,+\infty)), \;z\gg 1,~M_1>0.
  $$
\end{proof}

\begin{thm}\label{damvt2}
Assume that ({\bf $H_1$}) is satisfied with some  $c>0$, and
 $a(z)$ satisfies
\begin{equation}\label{z09}
\displaystyle\int^{+\infty}_{z_0}\widetilde{\sigma}_2(z){\rm d}z<+\infty, ~~
\widetilde{\sigma}_2(z): = \exp\left(\int_{z_0}^z \frac{-c+\sqrt{c^2 - 4a(s)}}{2}\, {\rm d}s\right),~ z\ge z_0\gg 1.
\end{equation}

(i) (\ref{bz}) admits a family of local positive solutions $\psi(z)$ decaying like
\begin{equation}\label{cv}\psi(z)\sim K \widetilde{\sigma}_2(z)\in L^1([z_0,+\infty)),~as~z\rightarrow+\infty,
\end{equation}
for some constant $K>0$ and there exist no other types of non-exponential local positive solutions belonging in $L^1([z_0,+\infty))$.

 (ii)  (\ref{bz}) may also admit another type of  local positive solutions not in $L^1([z_0,+\infty))$, such solution is denoted by $\psi_m(z)$, which  must satisfy
\begin{equation}\label{obac}\psi_{m}(z)\sim \frac{c\widetilde{\sigma}_2(z)}{\int_{z}^{+\infty}\widetilde{\sigma}_2(s){\rm d}s}=
-c\displaystyle \frac{{\rm d}}{{\rm d}z}\left[ \ln \left( \int_z^{+\infty} \widetilde{\sigma}_2(s) {\rm d}s \right) \right]\notin L^1([z_0,+\infty)),~as~z\to+\infty.\end{equation}
\end{thm}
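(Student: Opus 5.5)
The plan reuses the representation from the proof of Theorem \ref{deca}. Let $\psi(z)$ be any local positive solution of (\ref{bz}) decaying non-exponentially. By Proposition \ref{lepro} we have $0<\psi<a$, $\psi'<0$ and $\psi'/\psi\to0$. The ODE asymptotic theory of Section 2 then gives (\ref{eq:3.3}), i.e. $\psi\sim K_0\widetilde{\sigma}_\psi$. As in (\ref{eq:3.7}), this yields
$$\psi(z)=v(z)\widetilde{\sigma}_2(z)\exp\Big(\int_{z_0}^z\frac{\psi(s)}{c+b(s)}{\rm d}s\Big),\qquad v(z)\to K_0>0,\ b(z)\to0^-.$$
The whole dichotomy will be read off from this single identity. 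We again set $\psi_1=\psi/(c+b)$ and $\Psi_1(z)=\int_{z_0}^z\psi_1$, so that $\Psi_1$ satisfies (\ref{eq:3.10}).

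\textbf{Part (i).} For existence, we use that $\widetilde{\sigma}_2\in L^1([z_0,+\infty))$ by (\ref{z09}). We look for solutions of the form $\psi=\widetilde{\sigma}_2(z)w(z)$. In the diagonalized system (\ref{eig111}), the quadratic term $\widetilde B(P,z)P$ is then an integrable perturbation, since $|P|=O(\widetilde{\sigma}_2)\in L^1$. A contraction argument in the weighted space $\{\sup_{z\ge z_0}|P(z)|/\widetilde{\sigma}_2(z)<\infty\}$ then produces, for each small $K>0$ and $z_0\gg1$, a solution with $P_2\sim K\widetilde{\sigma}_2$ and $P_1=o(\widetilde{\sigma}_2)$. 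This is Levinson's theorem with $L^1$ perturbations, \cite[Chapter~IV,~Theorem 11]{[CWA]}. The smallness restriction on $K$ is removed by shifting $z_0$. This gives (\ref{cv}).

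For the classification inside $L^1$: if $\psi\in L^1$, then the exponential factor in the identity above converges to a finite positive limit, so $\psi\sim K\widetilde{\sigma}_2$ with $K=K_0e^{\int_{z_0}^\infty\psi_1}$.

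\textbf{Part (ii).} Suppose $\psi\notin L^1$. Then $\Psi_1(+\infty)=+\infty$. Integrating (\ref{eq:3.10}) from $z$ to $+\infty$ instead of from $z_0$ gives
$$\exp(-\Psi_1(z))=\int_z^{+\infty}\frac{v(s)}{c+b(s)}\widetilde{\sigma}_2(s){\rm d}s\sim\frac{K_0}{c}\int_z^{+\infty}\widetilde{\sigma}_2(s){\rm d}s,$$
where the asymptotic equivalence holds because the tail of an integral with integrand $\sim\frac{K_0}{c}\widetilde{\sigma}_2$ behaves like $\frac{K_0}{c}$ times the tail of $\widetilde{\sigma}_2$. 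Substituting back into (\ref{eq:3.10}):
$$\psi_1(z)=\Psi_1'(z)=\frac{v(z)}{c+b(z)}\widetilde{\sigma}_2(z)e^{\Psi_1(z)}\sim\frac{\widetilde{\sigma}_2(z)}{\int_z^\infty\widetilde{\sigma}_2(s){\rm d}s}.$$
Since $\psi=(c+b)\psi_1\sim c\,\psi_1$, this is exactly (\ref{obac}). Non-integrability of the right-hand side is automatic, because it equals $-c\frac{d}{dz}\ln\int_z^\infty\widetilde{\sigma}_2$ and $\int_z^\infty\widetilde{\sigma}_2\to0$.

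\textbf{Main obstacle.} The delicate step is justifying (\ref{eq:3.3}) for solutions $\psi\notin L^1$. Levinson's theorem as quoted in Section 2 needs the perturbation to be integrable, and here $\psi$ is not. I would handle this by treating $a(z)-\psi(z)$ as the new slowly varying coefficient: the linear equation $\psi''+c\psi'+(a-\psi)\psi=0$ has eigenvalues $\sigma_{1,\psi}$ and $\sigma_\psi$. Its diagonalizing transform has integrable derivative provided $\int|(a-\psi)'|<\infty$. This holds if $a-\psi$ is eventually monotone, or more generally has bounded variation. To get that, I would use $\psi'<0$, $a'\le0$, and the limits $\psi''/\psi'\to0$ from Proposition \ref{lepro}, checking that $(a-\psi)'$ changes sign only finitely often, or else bounding $\int|\psi'|$ and $\int|a'|$ separately, both of which are finite by monotonicity. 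The latter route in fact suffices directly, since $\int|(a-\psi)'|\le\int|a'|+\int|\psi'|=a(z_0)+\psi(z_0)$. Hence the Section 2 theory applies and (\ref{eq:3.3}) holds, with non-exponential decay selecting $\sigma_\psi$ rather than $\sigma_{1,\psi}$.
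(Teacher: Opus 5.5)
Your proposal is correct and follows the paper's proof. Part (ii) is the same argument: integrate $e^{-\Psi_1}\Psi_1'=\frac{v}{c+b}\widetilde{\sigma}_2$ from $z$ to $+\infty$, use $\Psi_1(+\infty)=+\infty$, and substitute back. For part (i) you write out the Levinson-type perturbation argument and the $L^1$ classification that the paper only asserts. Your handling of $\psi\sim K_0\widetilde{\sigma}_\psi$ for solutions not in $L^1$ is also sound: you apply Levinson's theorem to $y''+cy'+(a-\psi)y=0$ with $\int_{z_0}^{+\infty}|(a-\psi)'|\,{\rm d}z\le a(z_0)+\psi(z_0)$. This justifies a step the paper takes for granted, since its Section 2 remark is stated only for $\int\psi<\infty$.
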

\begin{proof}
 Note that
$\sigma_2(z)=-\frac{a(z)}{c}+O(a^2(z))$ for $z\geq z_0\gg 1$,
 then the assumption  \eqref{z09} is equivalent to  the following assumptions :
\begin{equation}\label{z10}
\int^{+\infty}_{z_0}a(z){\rm d}z=+\infty\;\;{\rm and}\; \displaystyle\int^{+\infty}_{z_0}\widetilde{\sigma}_2(z){\rm d}z<+\infty.
\end{equation}
By virtue of \eqref{z10}, the statement  (i) follows directly from  the standard  arguments based on   more general  ODE asymptotic theories as proved in Section 2, here we omit the detailed proof.

 (ii) Under the assumption (\ref{z10}),
let $\psi_m(z)$ be a local positive solution of (\ref{bz}) defined on $[z_0,+\infty)$ and
$\psi_m(z)\notin L^1([z_0,+\infty))$,
 by checking the proof of Theorem \ref{deca}, it is easy to see that the estimates (\ref{eq:3.2})-(\ref{eq:3.7}) are still valid for $\psi_m(z)$ on \([z_0, +\infty)\), i.e.
$\psi_m(z)$ also satisfies the integral equation
\begin{align}\label{le3.21}
\psi_m(z) = v(z) \widetilde{\sigma}_2(z) \exp\left(\int_{z_0}^z \frac{\psi_m(s)}{c + b(s)} \, {\rm d}s\right), \quad z \geq z_0,
\end{align}
with $v(z) \to k_0 > 0$ and $b(z) \to 0$ as $z \to +\infty$.

Let
\begin{align}\label{eq:de}
\psi_\ast(z) = \frac{\psi_m(z)}{c + b(z)}~~{\rm and}~~ \Psi_\ast(z) = \int_{z_0}^z \psi_\ast(s) \, {\rm d}s, \quad z \geq z_0,
\end{align}
obviously the assumption of (\ref{z10}) implies $\Psi_\ast(+\infty) =\int_{z_0}^{+\infty}\psi_\ast(s){\rm d}s=+\infty$, and the integral equation (\ref{le3.21}) can be rewritten as
\begin{equation}\label{le3.23}
\exp(-\Psi_\ast(s))\Psi_\ast'(s)=\frac{v(s)}{c+b(s)}\widetilde{\sigma}_2(s), \quad s \geq z_0.
\end{equation}
Integrating the equation (\ref{le3.23}) from $z$ to $+\infty$, we have
\begin{equation*}
\exp(-\Psi_\ast(z)) - \exp(-\Psi_\ast(+\infty)) = \int_z^{+\infty} \frac{v(s) \widetilde{\sigma}_2(s)}{c + b(s)} {\rm d}s, \quad z \geq z_0,
\end{equation*}
thus
\begin{equation}\label{psix}
\exp(\Psi_\ast(z)) = \left[\displaystyle\int_z^{+\infty} \frac{v(s) \widetilde{\sigma}_2(s)}{c + b(s)} {\rm d}s\right]^{-1}, \quad z \geq z_0,
\end{equation}
substituting (\ref{psix}) and (\ref{eq:de}) into (\ref{le3.21}), we have
\begin{equation}\label{le3.24}
\psi_m(z) = \frac{v(z) \widetilde{\sigma}_2(z)}{\displaystyle\int_z^{+\infty} \frac{v(s) \widetilde{\sigma}_2(s)}{c + b(s)} {\rm d}s}.
\end{equation}
Using the fact $v(z) \to k_0 > 0,  b(z) \to 0 \, \text{as } z \to +\infty$ and $\widetilde{\sigma}_2(z) \in L^1([z_0, +\infty))$, by (\ref{le3.24}) and some simple computations, it is easy to prove that
\begin{equation*}
\psi_m(z) \sim \frac{c \widetilde{\sigma}_2(z)}{\int_z^{+\infty} \widetilde{\sigma}_2(s) {\rm d}s}, \quad \text{as } z \to +\infty,
\end{equation*}
This completes the proof.
\end{proof}

In the following of this section, it remains to prove the existence of a local positive solution of \eqref{bz} which are not belonging in $L^1([z_0,+\infty))$ and by Theorem \ref{damvt2} such solution must satisfy the decaying estimate \eqref{obac}.

\begin{thm}\label{decay1}
 Assume that ({\bf $H_1$}) is satisfied for some $c>0$, and  assume that $\widetilde{\sigma}_2(z)\in L^1([z_0,+\infty))$, then
(\ref{bz})  admits a  local positive solution not belonging in  $L^1([z_0,+\infty))$, which is denoted by $\psi_m(z)$ and   must satisfy the  decaying estimate
\eqref{obac}.
\end{thm}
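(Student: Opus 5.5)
The plan is to obtain $\psi_m$ as a limit of the $L^1$-solutions from Theorem \ref{damvt2}(i), letting their amplitude $K$ tend to $+\infty$, and then to rule out that the limit stays in $L^1([z_0,+\infty))$. Once such a limit exists, positive and non-integrable, Theorem \ref{damvt2}(ii) already gives the decay estimate \eqref{obac}. So only existence has to be proved.

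\textbf{Step 1 (the family and a $K$-independent bound).} For $K>0$ let $\psi_K$ be the local positive solution of \eqref{bz} with $\psi_K\sim K\widetilde{\sigma}_2$, given by Theorem \ref{damvt2}(i). By a standard translation/continuation argument I expect all $\psi_K$ with $K\ge 1$ to be defined on a common interval $[z_1,+\infty)$; this has to be checked. I will then rerun the computation \eqref{le3.21}--\eqref{psix} for $\psi_K$, this time with $\Psi_K(+\infty)<+\infty$. Integrating \eqref{le3.23} from $z$ to $+\infty$ gives
\[
e^{-\Psi_K(z)}-e^{-\Psi_K(+\infty)}=\int_z^{+\infty}\frac{v_K(s)\widetilde{\sigma}_2(s)}{c+b_K(s)}\,{\rm d}s ,
\]
hence $e^{\Psi_K(z)}\le \big[\int_z^{+\infty}\frac{v_K\widetilde{\sigma}_2}{c+b_K}{\rm d}s\big]^{-1}$. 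Substituting into \eqref{le3.21} yields
\[
\psi_K(z)\le \frac{v_K(z)\widetilde{\sigma}_2(z)}{\int_z^{+\infty}\frac{v_K(s)\widetilde{\sigma}_2(s)}{c+b_K(s)}{\rm d}s}.
\]
The right-hand side is a quotient, so it does not see the size of $v_K$, only its oscillation. If the factors $v_K$ are uniformly comparable to constants (each $v_K$ lies in $[\tfrac12,2]$ times a constant on $[z_1,+\infty)$), this gives the $K$-independent bound
\[
\psi_K\le C\,\frac{c\,\widetilde{\sigma}_2}{\int_z^{+\infty}\widetilde{\sigma}_2}.
\]
Independently, Proposition \ref{lepro} gives $\psi_K<a$. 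Together with interior ODE estimates this yields uniform $C^2_{loc}$ bounds.

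\textbf{Step 2 (ordering and passage to the limit).} Next I show that $K\mapsto\psi_K$ is increasing on $[z_1,+\infty)$. For $K<K'$, near $+\infty$ we have $\psi_{K'}>\psi_K$ because the ratio tends to $K'/K>1$. Suppose they touched at a last point $\hat z$. Then $w=\psi_{K'}-\psi_K\ge0$ on $[\hat z,\infty)$ with $w(\hat z)=0$, and $w$ solves
\[
w''+cw'+(a-\psi_K-\psi_{K'})w=0 .
\]
The zero-order coefficient is positive, so the maximum principle is not directly available. I would instead use the positive function $\psi_K$ as a weight. Writing $w=\psi_K\eta$, the equation becomes
\[
\eta''+\Big(c+2\frac{\psi_K'}{\psi_K}\Big)\eta'-\psi_{K'}\,\eta=0 .
\]
This equation has a nonpositive zero-order term, and $\eta\to K'/K-1>0$ at infinity. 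The maximum principle on $[\hat z,\infty)$ then forbids an interior minimum of $\eta$ equal to $0$. With monotonicity in hand, $\psi_\infty:=\lim_{K\to\infty}\psi_K$ exists. By Step 1 and Arzel\`a--Ascoli it is a positive solution of \eqref{bz} with $\psi_\infty\to0$, since $\psi_\infty\le a$.

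\textbf{Step 3 (non-integrability).} Suppose instead $\psi_\infty\in L^1$. Then by Theorem \ref{damvt2}(i) we would have $\psi_\infty\sim K_*\widetilde{\sigma}_2$ for some finite $K_*$. This contradicts $\psi_\infty\ge\psi_{2K_*}\sim 2K_*\widetilde{\sigma}_2$. So $\psi_\infty\notin L^1([z_1,+\infty))$, and Theorem \ref{damvt2}(ii) gives \eqref{obac}; set $\psi_m:=\psi_\infty$.

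The main obstacle is Step 1: controlling $v_K$ uniformly in $K$, i.e.\ making the asymptotic representation \eqref{eq:3.3} quantitative on a fixed interval independent of $K$. If that fails, the fallback is to construct $\psi_m$ directly by sub/supersolutions $(1\mp\varepsilon)\,c\widetilde{\sigma}_2/\int_z^{+\infty}\widetilde{\sigma}_2$ on $[z_1,R]$, solving Dirichlet problems and letting $R\to\infty$. This relies on the identity that $w=-c(\ln S)'$, with $S(z)=\int_z^{+\infty}\widetilde{\sigma}_2$, solves the first-order Riccati reduction $w=c(\sigma_2$-correction$)$ up to lower-order terms.
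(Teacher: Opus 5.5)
\textbf{There is a genuine gap in your main route, and it goes beyond the one you flag.}

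Step 2 does not prove monotonicity in $K$. At a last contact point $\hat z$ the generic situation is a transversal crossing: $w<0$ just to the left of $\hat z$, and $w(\hat z)=0<w'(\hat z)$. Then $\eta=w/\psi_K$ attains its minimum $0$ at the \emph{endpoint} of $[\hat z,+\infty)$, not at an interior point. The $\eta$-equation only forbids interior positive maxima and interior negative minima, so $\eta$ can simply increase from $0$ to $K'/K-1$. A tangential contact was already excluded by ODE uniqueness.

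A comparison on $[z_1,+\infty)$ needs information at the left end $z_1$, and you do not have it. In fact no ordering can be expected for an arbitrary choice of $\psi_K$. The condition $\psi\sim K\widetilde{\sigma}_2$ does not determine $\psi$: adding a component along the fast direction $\exp\big(\int^z\sigma_1\big)=o(\widetilde{\sigma}_2)$ preserves the asymptotics. With a suitable fast component, $\psi_{K'}-\psi_K\approx (K'-K)\widetilde{\sigma}_2+C\exp\big(\int^z\sigma_1\big)$ is negative at some finite $z$.

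Without monotonicity Step 3 collapses. The bound $\psi_\infty\ge\psi_{2K_*}$ is lost, and even positivity of a subsequential limit is unclear.

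Step 1 has a related defect. The equation is non-autonomous, so translation is unavailable. The asymptotic-integration construction controls $\psi_K$ only on an interval $[z_0(K),+\infty)$, roughly where $K\int_z^{+\infty}\widetilde{\sigma}_2\ll c$. Hence $z_0(K)\to+\infty$, and that is exactly the region where $\psi_K$ should approach $\psi_m$. To make the limit argument work you would have to place all $\psi_K$ on one invariant slow manifold $\psi'=h(z,\psi)$ over a fixed interval; ordering would then follow from first-order uniqueness. That is a substantial construction you have not supplied.

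\textbf{Your fallback is the paper's actual route, but your supersolution fails.} The subsolution $(1-\delta)\psi^*$ is exactly the paper's. However, $(1+\varepsilon)\psi^*$ is not a supersolution under $(H_1)$. Writing $L[\psi]=\psi''+c\psi'+\psi(a-\psi)$ and using \eqref{le4.2},
\[
L[(1+\varepsilon)\psi^*]=(1+\varepsilon)\psi^*\Big[\sigma_2'+\psi^*\Big(-\varepsilon+\frac{3\sigma_2}{c}+\frac{2\psi^*}{c^2}\Big)\Big].
\]
Here $\sigma_2'=-2a'/\sqrt{c^2-4a}\ge 0$ is not a lower-order term. The function $a$ may drop steeply on short intervals, while $\psi^*$, whose derivative is $(\sigma_2+\psi^*/c)\psi^*$, does not react. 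So $\sigma_2'\le\varepsilon\psi^*$ can fail. The sign of $\sigma_2'$ that helps the subsolution is what spoils this supersolution.

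The paper fixes this with a smoothed majorant. It takes $a_+\ge(1+\delta)a$ with $a_+'\le 0$, $a_+'/a_+\to 0$ and $a_+(+\infty)=0$, and sets $\psi_+=c\widetilde{\sigma}_+/\int_z^{+\infty}\widetilde{\sigma}_+$. Three facts then make it work:
\begin{itemize}
\item the zero-order coefficient satisfies $a-a_++\sigma_+'\le a_+\big(-\frac{\delta}{1+\delta}+\frac{4}{c}\frac{|a_+'|}{a_+}\big)<0$ for large $z$;
\item the coefficient of $\psi_+^2$ is at most $\sigma_+/c<0$, because $\psi_+<-c\sigma_+$;
\item $\psi_+\ge\psi^*$, because $\sigma_+\le\sigma_2$.
\end{itemize}
With this replacement your fallback becomes the paper's proof. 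The solution trapped between $\psi_-$ and $\psi_+$ is not in $L^1([z_0,+\infty))$ because $\psi_-$ is not, and Theorem \ref{damvt2}(ii) then gives \eqref{obac}.
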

\begin{proof}
 Under the assumption $\widetilde{\sigma}_2(z)\in L^1([z_0,+\infty))$,
denote
\begin{equation}\label{le4.1}
\psi^*(z) = \frac{c \widetilde{\sigma}_2(z)}{\int_{z}^{+\infty} \widetilde{\sigma}_2(s) \, {\rm d}s} = \frac{c}{\int_{z}^{+\infty} {\rm e}^{\int^s_{z} \sigma_2(\tau)\,{\rm d}\tau}  \, {\rm d}s}.
\end{equation}
Note that $\psi^*(z) \to 0$ as $z \to +\infty$, and $\psi^*(z) \notin L^1([z_0, +\infty))$. By simple computation, it can be verified that
\begin{equation}\label{le4.2}
(\psi^*)'(z) = \left(\sigma_2(z) + \frac{1}{c} \psi^*(z)\right) \psi^*(z) \quad \text{for } z \ge z_0 \gg 1,
\end{equation}
and
\begin{equation}\label{le4.3}
0 < \psi^*(z) < -c \sigma_2(z) \quad \text{and} \quad (\psi^*)'(z) <0 \quad \text{for } z\ge z_0 \gg 1.
\end{equation}
By (\ref{le4.2}) and some computation,
\begin{equation*}
\begin{aligned}
(\psi^*)''(z)&= \left(\sigma_2(z) + \frac{2}{c} \psi^*(z)\right) (\psi^*)'(z) + \sigma'_2(z) \psi^*(z)\\
&= \left(\sigma_2^2(z) + \sigma'_2(z)\right) \psi^*(z) + \frac{3\sigma_2(z)}{c} (\psi^*(z))^2 + \frac{2}{c^2} (\psi^*(z))^3, \quad z \ge z_0.
\end{aligned}
\end{equation*}
Thus
\begin{align}\label{leequa}
& (\psi^*)''(z) + c (\psi^*)'(z) + \psi^*(z) \big(a(z) - \psi^*(z)\big) \nonumber \\
&= \left(\sigma_2^2(z) + c \sigma_2(z) + a(z) + \sigma'_2(z)\right) \psi^*(z) + \big(\psi^*(z)\big)^2 \left(\frac{3\sigma_2(z)}{c} + \frac{2}{c^2} \psi^*(z)\right) \nonumber\\
&=\frac{1}{c^2}(\psi^*(z))^2 \left[3 c\sigma_2(z) + 2 \psi^*(z)\right]+\sigma'_2(z) \psi^*(z).
\end{align}
Note that
\[
\sigma_2(z) = -\frac{a(z)}{c} - \frac{2a^2(z)}{c^3} + o(a^2(z)) < -\frac{a(z)}{c}, \quad z \ge z_0 \gg 1,
\]
and
\begin{equation}\label{le4.5}
\sigma_2'(z) = -\frac{2a'(z)}{\sqrt{c^2 - 4a(z)}} > 0, \quad \text{for } z \ge z_0 \gg 1.
\end{equation}
Let
\begin{equation}\label{le4.6}
\psi_-(z) = (1 - \delta) \psi^*(z), \quad z \ge z_0,
\end{equation}
with a fixed positive constant $\delta \in (0, 1)$.
By (\ref{le4.3})-(\ref{le4.5}) and similar computation, it can be verified that
\begin{equation}\label{eq:inequality}
\begin{aligned}
&\psi_-''(z) + c \psi_-'(z) + \psi_-(z) (a(z) - \psi_-(z))\\
&= (1 - \delta) \sigma_2'(z) \psi^*(z)
+ (1 - \delta) (\psi^*(z))^2 \left[\delta + \frac{3 \sigma_2(z)}{c^2} + \frac{2}{c^2} \psi^*(z)\right] \\
&\ge \frac{\delta}{2} (1 - \delta) (\psi^*(z))^2 > 0, \quad \text{for } z \gg 1,
\end{aligned}
\end{equation}
due to the fact  $\sigma_2(z)\to 0,\;\psi^*(z)\to 0$ as $z\to +\infty$.

For fixed $\delta>0$, we can choose a local positive function $a_+(z)$ defined on $[z_0,+\infty)$ for $z_0\gg1$, such that
\begin{equation}\label{prona}
\begin{aligned}
& a_+(z) \ge (1+\delta)a(z)~~{\rm and} \quad a_+'(z) \le 0,~ z \in [z_0,+\infty)\\
& \lim_{z \to +\infty} \frac{a_+'(z)}{a_+(z)} = 0~~{\rm and} \quad a_+(+\infty) = 0.
\end{aligned}
\end{equation}
Denote
\[
\sigma_+(z) = \frac{-c+\sqrt{c^2-4a_+(z)}}{2}, \quad \widetilde{\sigma}_+(z) = {\rm e}^{\int_{z_0}^{z} \sigma_+(s) \, {\rm d}s}, \quad z \gg 1.
\]
Using the fact that $\sigma_+(z) < \sigma_2(z) < 0$, then $\widetilde{\sigma}_+(z) \in L^1([z_0,+\infty))$. Define
\begin{equation}\label{eq:noq}
\psi_+(z) = \frac{c \widetilde{\sigma}_+(z)}{\int_{z}^{+\infty} \widetilde{\sigma}_+(s) \, {\rm d}s} = \frac{c}{\int_{z}^{+\infty} {\rm e}^{\int_{z}^{s} \sigma_+(\tau) \, {\rm d}\tau} \, {\rm d}s}.
\end{equation}
By (\ref{le4.1}), (\ref{prona})-(\ref{eq:noq}), it is easy to prove that $\psi_+(z) \to 0$ as $z \to +\infty$ and $\psi_+(z) \notin L^1([z_0,+\infty))$, and
\begin{equation}\label{Le:equa1}
\psi_+(z) \ge \psi^*(z) \ge \psi_-(z) \quad \text{for } z \gg 1.
\end{equation}

Also note that
\begin{equation}
\psi_+'(z) = \sigma_+(z) \psi_+(z) + \frac{1}{c} \psi_+^2(z), \quad z \ge z_0,
\end{equation}
and
\begin{equation*}
\begin{aligned}
&\psi_+'(z) < 0, \quad 0 < \psi_+(z) < -c \sigma_+(z), \quad z \ge z_0,\\
&\sigma_+^2(z) + c \sigma_+(z) + a_+(z) \equiv 0, \quad z \ge z_0.
\end{aligned}
\end{equation*}
By similar computation as in (\ref{eq:inequality}), we have
\begin{align}
&\psi''_+(z) + c \psi'_+(z) + \psi_+(z) (a(z) - \psi_+(z)) \notag \\
&= \left(\sigma^2_+(z) + c \sigma_+(z) + a(z)+\sigma'_+(z)\right) \psi_+(z) + \psi^2_+(z) \left[\frac{3\sigma_+(z)}{c} + \frac{1}{c^2} \psi^2_+(z)\right] \notag \\
&= \left[a(z) - a_+(z) - \frac{2a'(z)}{\sqrt{c^2 - 4a_+(z)}}\right] \psi_+(z) + \frac{\psi^2_+(z)}{c^2} \big(3c \sigma_+(z) + \psi_+(z)\big) \notag \\
&\le a_+(z) \left[\frac{-\delta}{1+\delta} + \frac{4}{c}\frac{|a'_+(z)|}{a_+(z)} \right] \psi_+(z) + \frac{2}{c} \psi^2_+(z) \sigma_+(z) \notag \\
&\le \frac{-\delta}{2(1+\delta)} a_+(z) \psi_+(z) < 0, \quad  z \gg 1, \label{eq:final_inequality}
\end{align}
using the fact
$
\frac{|a'_+(z)|}{a_+(z)} \to 0 ~ \text{as } z \to +\infty,
$
and $\sigma_+(z) < 0$ for $z \gg 1$.

Thus $\psi_+(z)$ and $\psi_-(z)$ are super/subsolutions of (\ref{bz}), which together with (\ref{Le:equa1}) proves the existence of a local positive solution
$\psi_m(z)\notin L^1([z_0,+\infty))$ satisfying
 $\psi_-(z) \le \psi_m(z) \le \psi_+(z)~{\rm for}~z \ge z_0$.
\end{proof}

\begin{proposition}\label{pro3.2}
The assumption $\widetilde{\sigma}_2(z)\in L^1([z_0,+\infty))$ includes  three typical cases of $a(z)$ as stated in Remark \ref{rem2.3}, and the decaying estimates stated in Remark \ref{rem2.3} hold true.
\end{proposition}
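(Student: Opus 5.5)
The plan is to treat the three cases of Remark \ref{rem2.3} separately, using two common tools. The first is the expansion (\ref{eig9}), $\sigma_2(z)=-\frac{a(z)}{c}+O(a^2(z))$. The second is a comparison of $\int_z^{+\infty}\widetilde{\sigma}_2(s){\rm d}s$ with an explicit antiderivative. For the first tool, in cases (i) and (ii) we have $za(z)=O(1)$, so $a^2\in L^1([z_0,+\infty))$. Hence $\widetilde{\sigma}_2(z)\sim K\exp\big(-\frac1c\int_{z_0}^z a(s){\rm d}s\big)$, and everything reduces to estimating $E(z):=\exp\big(-\frac1c\int_{z_0}^z a\big)$. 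In case (iii) $a^2$ need not be integrable, but $\sigma_2\le -a/c$ still gives $\widetilde{\sigma}_2\le E$ up to a constant, and that is all we need there.

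For case (i), we integrate the lower bound (\ref{crit+}) exactly as in the proof of Proposition \ref{prop3.1}. This gives $\frac1c\int^z a\ge \ln(z\ln z\cdots\ln^{k-1}z)+\frac rc\ln^{k+1}z+C$, so $E(z)\le M\big(z\ln z\cdots\ln^{k-1}z\,(\ln^kz)^{r/c}\big)^{-1}$. Since $r/c>1$, this is integrable, and therefore $\widetilde{\sigma}_2\in L^1$. For case (ii), we have $a(z)\ge(\gamma-\epsilon)/z$ for large $z$, so $E=O(z^{-(\gamma-\epsilon)/c})$, which is integrable once $\epsilon<\gamma-c$. For case (iii), $za\to\infty$ gives $E=O(z^{-p})$ for every $p$. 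In each case integrability then places $a$ under Theorem \ref{damvt2} and Theorem \ref{decay1}.

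For the lower bounds on $\psi^*=c\widetilde{\sigma}_2/\int_z^\infty\widetilde{\sigma}_2$, I would use the identity (\ref{le4.2}), which is equivalent to $\big(1/\psi^*\big)'=-\sigma_2/\psi^*-\frac1c$. A cleaner route is as follows. Set $I(z)=\int_z^\infty\widetilde{\sigma}_2$ and $g(z)=\widetilde{\sigma}_2(z)\,h(z)$, where $h$ is a trial weight: $h=z$ in case (ii), and $h=z\ln z\cdots\ln^kz$ in case (i). Then
\[
g'=\widetilde{\sigma}_2\big(\sigma_2h+h'\big)\le-\mu\,\widetilde{\sigma}_2 ,
\]
with $\mu=(\gamma-c)/c-\epsilon$ in case (ii) and $\mu=(r-c)/c-\epsilon$ in case (i). In case (i) this uses $h'=h\sum_{j}\big(z\ln z\cdots\ln^jz\big)^{-1}$ together with (\ref{crit+}). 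Since $g\to0$ along the relevant sequence, integrating from $z$ to $\infty$ gives $\widetilde{\sigma}_2(z)h(z)\ge\mu I(z)$. This yields $\psi^*\ge c\mu/h$, which is the claimed bound up to $\epsilon$. To obtain the constant exactly, as in (\ref{ouy32v}), I would rerun the argument with $\limsup$ and liminf bookkeeping, or absorb $\epsilon$ into $r$.

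For case (iii), $\phi^{\max}_c\sim\psi^*$ by Theorem \ref{damvt2}, so it suffices to show $z\psi^*\to\infty$. The argument above with $h=z$ and $\mu$ arbitrarily large gives $\psi^*\ge c\mu/z$ for every $\mu$. Under $a'/a^2\to0$, I would apply L'Hôpital to $I(z)/\big(\widetilde{\sigma}_2(z)/(-\sigma_2(z))\big)$. The derivative ratio is $1/\big(1+\sigma_2'/\sigma_2^2\big)$, and $\sigma_2'/\sigma_2^2\sim -c\,a'/a^2\cdot(\text{const})\to0$. Hence $I\sim\widetilde{\sigma}_2/|\sigma_2|$, so $\psi^*\sim c|\sigma_2|\sim a$. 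The main obstacle is case (i), which involves the iterated-logarithm bookkeeping and the need for the exact constant $r-c$. I expect the integration-by-parts comparison to handle it once the derivative of $h$ is written out carefully.
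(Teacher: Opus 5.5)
Your proposal is correct. It follows the paper for integrability and differs only in how you bound $\psi^*$ from below.

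\textbf{Comparison with the paper.} For integrability, both you and the paper integrate the lower bound on $a$ and use $\sigma_2\le -a/c$, so that $\widetilde\sigma_2\le E$. For the lower bound on $\psi^*=\Psi_a$, the paper writes $\Psi_a(z)=c\big/\int_z^{+\infty}\exp\big(\int_z^s\sigma_2(\tau)\,{\rm d}\tau\big){\rm d}s$. Since this is monotone in $\sigma_2$, it replaces $\sigma_2$ by $-a_0/c$, where $a_0$ is the right-hand side of (\ref{crit+}), resp.\ $\gamma/z$. It then evaluates the model integral in closed form. You prove the same comparison in differential form. Your weight is $h=\mu\int_z^{+\infty}E_0\big/E_0$ for the model $E_0=\exp(-\frac1c\int^z a_0)$, so $(E_0h)'=-\mu E_0$. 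The inequality $\sigma_2\le E_0'/E_0$ then turns this into $(\widetilde\sigma_2h)'\le-\mu\widetilde\sigma_2$. The paper's version is shorter when the model integral is explicit. Yours needs only $h'$, works for any trial weight, and makes $\epsilon$-losses visible. Your L'H\^opital on $I\big/(\widetilde\sigma_2/|\sigma_2|)$ is equivalent to the paper's on $\Psi_a/a$. The derivative ratio is $1/(1-\sigma_2'/\sigma_2^2)$ rather than $1/(1+\sigma_2'/\sigma_2^2)$, but that sign slip is harmless.

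\textbf{Corrections.}

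\emph{Case (i) has no $\epsilon$ and no obstacle.} With $h=z\ln z\cdots\ln^kz$ you have $h'/h=\sum_{j=0}^k(z\ln z\cdots\ln^jz)^{-1}$. Then (\ref{crit+}) and $\sigma_2\le-a/c$ give
\[
\sigma_2h+h'\le h\Big(\frac{h'}{h}-\frac{a_0}{c}\Big)=1-\frac rc ,\qquad z\gg1,
\]
so $\mu=(r-c)/c$ exactly, and (\ref{ouy32v}) follows with the exact constant $r-c$. Absorbing $\epsilon$ into $r$ would go in the wrong direction anyway.

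\emph{You do not need $g\to0$.} Since $g>0$, integrating over $[z,Z]$ gives $g(z)\ge\mu\int_z^Z\widetilde\sigma_2$; then let $Z\to+\infty$.

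\emph{Case (ii) does not give $za=O(1)$.} The hypothesis $\varliminf za=\gamma$ allows a non-increasing staircase-type $a$ with $\varlimsup za=+\infty$ and $a^2\notin L^1$. So $\widetilde\sigma_2\sim KE$ is unjustified there. This is harmless, because you only use $\widetilde\sigma_2\le E$.

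\emph{Your $\epsilon$-loss in case (ii) is genuine.} The constants $\gamma/c$ and $\gamma-c$ in Remark \ref{rem2.3}(ii) come from comparing with $\gamma/z$, and they require $za\ge\gamma$ eventually. For $za=\gamma-1/\ln z$ one finds $\widetilde\sigma_2\asymp z^{-\gamma/c}(\ln z)^{1/c}$ and $z\psi^*=\gamma-c-(1+o(1))/\ln z$. So the correct statement is $\varliminf_{z\to+\infty}z\psi^*(z)\ge\gamma-c$, which is exactly what your argument proves.
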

\begin{proof} For the  case (i): under the assumption of (\ref{crit+}),  for $r>c$ and the fixed integer $k\ge 1$, let
\begin{align}
a_0(z)&=c\left(\frac{1}{z}+\frac{1}{z\ln z}_\cdot\cdot\cdot+\frac{r/c}{z\ln z\cdot\cdot\cdot\ln^kz}\right)\\
&=c\displaystyle\frac{{\rm d}}{{\rm d}z}\left(\ln \left|z\ln z\cdot\cdot\cdot\ln^{k-1}z(\ln^kz)^\frac{r}{c}\right|\right),\;\;z\gg 1, \label{last}
\end{align}
and note that
\begin{equation}\label{sigm+}
\exp\Big(\frac{-1}{c}\int_{z_0}^za_0(s){\rm d}s\Big)=
\frac{M}{|z\ln z\cdot\cdot\cdot(\ln^kz)^\frac{r}{c}|},
\end{equation}
which with the estimates $a(z)\geq a_0(z)$ and $\sigma_2(z)\leq -a(z)/c$ for $z\gg 1$ assures that
$$
\widetilde{\sigma}_2(z)\leq  \exp\Big(\frac{-1}{c}\int_{z_0}^za_0(z){\rm d}s\Big)=
O\big(\frac{1}{z\ln z\cdot\cdot\cdot(\ln^kz)^\frac{r_1}{c}}\big),\;z\gg 1.
$$
Denote
$$
\Psi_{a}(z)=\frac{c \widetilde{\sigma}_2(z)}{\int_{z}^{+\infty}\widetilde{\sigma}_2(s){\rm d}s}
=\frac{c}{\int_{z}^{+\infty} {\rm e}^{\int^s_{z} \sigma_2(\tau)\,{\rm d}\tau}  \, {\rm d}s}.$$
Let $\sigma_0(z):=\frac{-c+\sqrt{c^2-4a_0(z)}}{2}$ and note that
$\sigma_2(z)\leq \sigma_0(z)\leq -\frac{1}{c}a_0(z)$, further by (\ref{last}) and \eqref{sigm+}, it is easy to verify that
$$
\Psi_a(z)\ge \Psi_{a_0}(z)\sim \frac{c}{\int_{z}^{+\infty} {\rm e}^{\frac{-1}{c}\int^s_{z} a_0(\tau)\,{\rm d}\tau}  \, {\rm d}s}
=(r-c)\big(z\ln z\cdot\cdot\cdot(\ln^kz)\big)^{-1},\;\;z\gg 1.
$$

 For the case (ii): $\varliminf\limits_{z\to+\infty}za(z)=\gamma\in (c,+\infty)$, let $a_\gamma(z)=\frac{\gamma}{z}$, it is easy to check that
$\Psi_{a_\gamma}(z)= \frac{\gamma -c}{z}$,
then the estimates hold true by comparison argument.

For the case (iii): $\lim\limits_{z\to+\infty}za(z)=+\infty$,
 similar to case (ii), we have   $\Psi_{a}(z)\geq \frac{p-c}{z},~\forall p>1$, which means
$z\Psi_a(z)\to +\infty$ as $z\to +\infty$. Note that if  $\lim\limits_{z\to +\infty}\frac{a'(z)}{a^2(z)}=0$, then
\begin{equation*}\label{q}
\begin{aligned}
\lim_{z\to+\infty}\frac{\Psi_{a}(z)}{a(z)}&=
\lim_{z\to+\infty}\frac{c\big{(}\widetilde{\sigma}_2(z)/a(z)\big{)}'}{-\widetilde{\sigma}_2(z)}=
\lim_{z\to+\infty}\Big{(}\frac{-c\widetilde{\sigma}_2'(z)}{\widetilde{\sigma}_2(z)a(z)}+\frac{ca'(z)}{a^2(z)}\Big{)}\\
&=\lim_{z\to+\infty}\frac{-c\sigma_2(z)}{a(z)}+\lim_{z\to+\infty}\frac{ca'(z)}{a^2(z)}=1.
\end{aligned}
\end{equation*}
\end{proof}

\begin{remark}\label{rem3.1}
From the proofs of Propositions  \ref{prop3.1} and \ref{pro3.2}, we can see that for some typical  critical subcases when  $\lim\limits_{z\to+\infty}za(z)=c$, the second and higher order asymptotic expansion of $a(z)$ at $z=+\infty$  heavily affect the decaying rate of $\widetilde{a}(z)=\exp\big{(}-\frac{1}{c}\int_{z_0}^{z}a(s){\rm d}s\big{)}$ and $\widetilde{\sigma}_2(z)$, and $\int^{+\infty}_{z_0}\widetilde{\sigma}_2(z){\rm d}z$ can be convergent or divergent, which  further determine the existence/nonexistence of local positive solutions of (\ref{bz}) decaying non-exponentially at $z=+\infty$. It is easy to see that there exist more general critical cases of $a(z)$  not included in the typical  cases of Remark  \ref{rem2.1} or Remark \ref{rem2.2};  such as $\lim\limits_{z\to+\infty}za(z)=c$ but neither  (\ref{crit-}) nor (\ref{crit+}) is  satisfied, or  $\limsup\limits_{z\to+\infty}za(z)> c>\liminf\limits_{z\to+\infty}za(z)$. For such non-typical cases or more general critical cases of $a(z)$, Theorems \ref{deca}-\ref{decay1} indicate that the existence/nonexistence and the precise decay of local positive non-exponential decay solutions of (\ref{bz})  are crucially determined  (classified) by the test $\int^{+\infty}_{z_0}\widetilde{\sigma}_2(z){\rm d}z$ is convergence/divergence, which further requires the precise expression of $a(z)$ and precise asymptotic behavior of $a(z)$ near $z=+\infty$. The local existence/nonexistence and the precise decay obtained in Section 3 are also useful or crucial for our further investigation of the existence/nonexistence and the multiplicity/uniquesness of forced waves.
\end{remark}

\section{Existence and uniqueness/multiplicity  of  forced traveling waves}

In this section, we focus on the existence/nonexistence, the uniqueness/multiplicity and spatial decay of all the forced waves $\phi_c(x-ct)$ of (\ref{Forceddd11}), where $\phi_c(z)$ are the global
positive solutions to the following differential equation:
\begin{equation}\label{nonle4}
\phi''_{c}(z)+c\phi'_c(z)+\phi_c(z)\big{(}a(z)-\phi_c(z)\big{)}=0,~z\in \mathbb{R},
\end{equation}
and satisfy the asymptotic boundary condition
\begin{equation}\label{nokm}
\phi_c(-\infty)=\alpha>0,~~\phi_c(+\infty)=0.
\end{equation}
By virtue of the existence/nonexistence and spatial decay of all the local positive solutions $\psi(z)$ of (\ref{bz}) obtained in Section 3,
we shall prove the existence
of several types  of wave solutions by constructing appropriate pairs of super- and sub-solutions.
Furthermore, the uniqueness of the forced wave with exponential decay or with the slowest non-exponential decay can be proved by applying the
variational method, sliding technique and comparison argument.

\begin{lemma}\label{extence31}
For any given $c>0$,  under the assumption
$\widetilde{\sigma}_2(z)\in L^1([z_0,+\infty))$,
(\ref{nonle4}) and (\ref{nokm}) admit at most one forced wave solution $\phi_c(z)$  not belonging in $L^1([z_0,+\infty))$.
\end{lemma}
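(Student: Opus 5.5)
Suppose $\phi_1$ and $\phi_2$ are two forced waves of (\ref{nonle4})--(\ref{nokm}), neither in $L^1([z_0,+\infty))$. The plan is to show $\phi_1\equiv\phi_2$ in three steps: (a) their ratio tends to $1$ at both ends, (b) their ordering forces a definite sign on a boundary flux, and (c) a Wronskian/variational identity then forces equality.

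\textbf{Step 1: the ratio tends to $1$.} By Theorem \ref{damvt2}(ii), each restriction $\phi_i|_{[z_0,+\infty)}$ is a local positive solution of (\ref{bz}) not in $L^1$. Hence both satisfy (\ref{obac}), and
\[
\lim_{z\to+\infty}\frac{\phi_1(z)}{\phi_2(z)}=1 .
\]
At $z=-\infty$ both tend to $\alpha$ by Lemma \ref{negnega1}, so the ratio tends to $1$ there as well.

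\textbf{Step 2: ordering.} By Theorem A there are more than one solution, so the solutions form an ordered continuum. Without loss of generality $\phi_1\le\phi_2$; if they are not identical, the strong maximum principle gives $\phi_1<\phi_2$ on $\mathbb{R}$. (If Theorem A's ordering is not used directly, I would obtain it from sliding: the ratio is bounded and tends to $1$ at both ends.)

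\textbf{Step 3: the identity.} The function $\phi_i$ satisfies $\phi_i''+c\phi_i'+(a-\phi_i)\phi_i=0$. Multiply the equation of $\phi_1$ by $e^{cz}\phi_2$, the equation of $\phi_2$ by $e^{cz}\phi_1$, subtract, and integrate over $[-R,R]$. This gives
\[
\Big[e^{cz}\big(\phi_1'\phi_2-\phi_2'\phi_1\big)\Big]_{-R}^{R}=\int_{-R}^{R}e^{cz}\phi_1\phi_2(\phi_1-\phi_2)\,{\rm d}z .
\]
The right side is strictly negative and decreasing in $R$.

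\textbf{The boundary terms (main obstacle).} At $-\infty$ the term vanishes: $e^{cz}\to0$ while $\phi_i,\phi_i'$ are bounded, by standard elliptic estimates. At $+\infty$ the weight $e^{cz}$ blows up, and this is the hard part. The boundary term equals $e^{cz}\phi_2^2(\phi_1/\phi_2)'$. Its behaviour depends on how fast $\phi_1/\phi_2\to1$, which (\ref{obac}) alone does not control.

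First attempt: reduce to a first-order comparison, as in the proof of Theorem \ref{damvt2}. Representation (\ref{le3.24}) gives
\[
\frac{1}{\phi_i}=\frac{1}{v_i\widetilde{\sigma}_2}\int_z^{+\infty}\frac{v_i\widetilde{\sigma}_2}{c+b_i}\,{\rm d}s .
\]
Compare the two Riccati-type first-order equations satisfied by $\phi_i'/\phi_i$. By Proposition \ref{lepro}, $\phi_i'/\phi_i\to0$, and
\[
w_i:=\frac{\phi_i'}{\phi_i}\quad\text{satisfies}\quad w_i'+w_i^2+cw_i+a-\phi_i=0 .
\]
Subtracting the two equations, $w_1-w_2$ solves a linear equation whose coefficient is approximately $c$ and whose forcing is $\phi_1-\phi_2<0$. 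Integrating this from $z$ to $\infty$ with the factor $e^{cz}$ shows
\[
w_1-w_2=e^{-cz}\int_z^{\infty}\big(\text{negative}\big)\,{\rm d}s+\dots
\]
Then $e^{cz}\phi_1\phi_2(w_1-w_2)$ has a definite sign, or tends to a finite limit. Combined with the identity, this forces the boundary flux at $+\infty$ to have the wrong sign. The resulting contradiction shows $\phi_1\equiv\phi_2$.

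Fallback: a sliding argument in the scaling parameter. Let $\tau^*=\inf\{\tau\ge1:\tau\phi_1\ge\phi_2\}$. This infimum is finite since the ratio tends to $1$ at both ends. For $\tau>1$ the function $\tau\phi_1$ is a strict supersolution, since $\tau\phi_1(a-\tau\phi_1)<\tau\phi_1(a-\phi_1)$. If $\tau^*>1$, touching cannot occur at an interior point. It also cannot occur at $\pm\infty$, because the ratio tends to $1<\tau^*$ there. Hence $\tau^*=1$, so $\phi_1\ge\phi_2$, and by symmetry $\phi_1=\phi_2$.

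This fallback is in fact the cleaner route. It needs only that the ratio is bounded and tends to $1$ at both ends, which is exactly what Step 1 supplies, and it avoids any flux estimate. I would therefore present it as the main proof.
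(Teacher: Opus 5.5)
Your fallback argument, which you rightly make the main proof, is essentially the paper's. Both get $\phi_2/\phi_1\to1$ at $\pm\infty$ from Theorem~\ref{damvt2}(ii) and $\phi_i(-\infty)=\alpha$, take the least $\tau^*$ with $\tau^*\phi_1\ge\phi_2$, and rule out $\tau^*>1$ at an interior contact point. The paper does this via the strong maximum principle for $w''+cw'\le m(z)w$. You do it via the strict inequality $w''+cw'=-\tau^*(\tau^*-1)\phi_1^2<0$ at the contact point, which also spares you the ordering from Theorem A.

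The Riccati/flux discussion in your first attempt is not a proof as written and should be dropped. Note, though, that once $\phi_1<\phi_2$ your identity already gives $(\phi_1/\phi_2)'<0$ on all of $\mathbb{R}$. That contradicts the equal limits at $\pm\infty$, with no analysis at $+\infty$ needed.
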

\begin{proof}
Assume that (\ref{nonle4}) and ((\ref{nokm})) admit two distinct  forced wave solutions not belonging in $L^1([z_0,+\infty))$, which are  denoted by  $\varphi(z)$ and $\psi(z)$.
Suppose that $\varphi(z)<\psi(z),~\forall z\in\mathbb{R}$, owing to the ordering of forced waves. Define
$$k_0=\inf K:=\{k>0:~k\varphi(z)\geq\psi(z),~\forall z\in\mathbb{R}\}. $$

 By Lemma \ref{damvt2}, we have
$$
\varphi(z)\sim \psi(z)\sim\frac{c\widetilde{\sigma}_2(z)}{\int_{z}^{+\infty}\widetilde{\sigma}_2(s){\rm d}s},~{\rm as}~z\to+\infty,
$$
and by virtue of $\varphi(-\infty)=\psi(-\infty)=\alpha>0$, we have
$$
\lim_{z\to\pm\infty}\frac{\psi(z)}{\varphi(z)}=1.
$$
Then there exists some constant $M>0$ such that $\frac{\psi(z)}{\phi(z)}\leq M$ for all $z\in\mathbb{R}$,
which implies that the set $K$ is not empty and $k_0>1$.
If $k_0=1$,  then $\varphi(z)\geq\psi(z),~\forall z\in\mathbb{R}$,  which is impossible.

Let $w(z)=k_0\varphi(z)-\psi(z)\geq0,~~z\in\mathbb{R}$,
then $\inf\limits_{z\in\mathbb{R}}w(z)=0$.
Using the fact that $\frac{\psi(z)}{\varphi(z)}\to1$ as $z\to+\infty$ and $k_0>1$,  then there
exists large $z_0>0$ such that $w(z)>0$ for $z>z_0$,  and note that
$w(-\infty)=(k_0-1)\alpha>0$. Thus, $w(z)$ admits a local minimum $w(\overline{z})=0$
for some $\overline{z}\in\mathbb{R}$.  Further, by (\ref{nonle4}), we obtain
\begin{equation}\label{m}
\begin{aligned}
w''(z)+cw'(z)&=-k_0\varphi(z)\big{(}a(z)-\varphi(z)\big{)}+\psi(z)\big{(}a(z)-\psi(z)\big{)}\\
&\leq \big{(}k_0\varphi(z)+\psi(z)-a(z)\big{)}\big{(}k_0\varphi(z)-\psi(z)\big{)}=:m(z)w(z),~z\in\mathbb{R},
\end{aligned}
\end{equation}
where $m(z)$ is bounded. Due to the fact $w(z)\geq 0$ for $z\in \mathbb{R}$,
by applying the strong maximum principle to (\ref{m}), we have $w(z)\equiv0$ on $z\in\mathbb{R}$,
which contradicts to $w(-\infty)>0$. This completes the proof.
\end{proof}

\begin{prop}\label{ex}
Assume that $c>0$ and  ({\bf $H_1$}) is satisfied,
if (\ref{nonle4}) and \eqref{nokm}  admit a forced wave solution $\phi_c(z)$ decaying exponentially  as $z\to+\infty$, then
$c\in(0,~2\sqrt{\alpha})$. \end{prop}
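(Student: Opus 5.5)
We argue by contradiction: assume $c\ge 2\sqrt{\alpha}$ and that $\phi_c$ is a forced wave decaying exponentially at $z=+\infty$. By Lemma \ref{decavr}, $\phi_c(z)\sim K_1\exp\big(\int_{z_0}^z\sigma_1(s)\,{\rm d}s\big)$ for some $K_1>0$. Since $\sigma_1(z)\to -c$, this means that for every $\epsilon>0$ we have $\phi_c(z)=o({\rm e}^{-(c-\epsilon)z})$ as $z\to+\infty$. Because $c\ge 2\sqrt{\alpha}>c/2$, we may pick $\mu$ with $c/2<\mu<c$; then $\phi_c(z)=o({\rm e}^{-\mu z})$. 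The goal is to play this fast decay on the right against the requirement $\phi_c(-\infty)=\alpha$ and the fact that the linearization at $0$ has local growth rate at most $\alpha$.

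\textbf{Step 1 (Liouville-type transformation).} Set $w(z)={\rm e}^{cz/2}\phi_c(z)$. A direct computation turns (\ref{nonle4}) into
$$w''+\Big(a(z)-\frac{c^2}{4}-\phi_c(z)\Big)w=0,\qquad z\in\mathbb{R}.$$
Since $a\le\alpha$, $\phi_c>0$ and $c^2/4\ge\alpha$, the coefficient $q(z):=a(z)-c^2/4-\phi_c(z)$ is strictly negative everywhere. Hence $w''=-qw>0$, so $w$ is a positive strictly convex function on $\mathbb{R}$.

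\textbf{Step 2 (behaviour of $w$ at $\pm\infty$).} As $z\to+\infty$ we have $w(z)=o({\rm e}^{(c/2-\mu)z})\to0$, since $\mu>c/2$. A positive convex function tending to $0$ at $+\infty$ must be nonincreasing; because $w''>0$ it is in fact strictly decreasing, so $w'<0$ on $\mathbb{R}$. As $z\to-\infty$, we have $\phi_c\to\alpha$, so $w(z)\sim\alpha{\rm e}^{cz/2}\to0$. Thus $w$ tends to $0$ at both ends, and this is the contradiction: a positive strictly convex function cannot tend to $0$ at both $\pm\infty$. Indeed, convexity together with $w(+\infty)=0$ forces $w'\le0$ everywhere, while $w(-\infty)=0$ together with positivity forces $w'>0$ somewhere.

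\textbf{Expected obstacle.} The one delicate point is justifying that $w\to0$ at $+\infty$. This needs $\phi_c$ to decay strictly faster than ${\rm e}^{-cz/2}$, which is exactly what the exponential-decay classification of Lemma \ref{decavr} provides through $\sigma_1\to-c$. The alternative rate $\sigma_2\to0$ is excluded by the assumption that $\phi_c$ decays exponentially, so that assumption enters precisely here. The boundary case $c=2\sqrt{\alpha}$ needs no separate treatment, since $q<0$ still holds strictly because $\phi_c>0$. If one prefers to avoid convexity, an equivalent route is a Sturm comparison between $w$ and the constant function: from $w''>0$ and $w\to0$ at both ends, the maximum principle gives $w\le0$, contradicting $w>0$.
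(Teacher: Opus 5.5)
Your proposal is correct and is essentially the paper's proof: the paper also sets $W(z)={\rm e}^{\frac{c}{2}z}\phi_c(z)$ and uses Lemma \ref{decavr} to get $\phi_c(z)=o({\rm e}^{-(c-\epsilon)z})$, hence $W(\pm\infty)=0$; it then reaches a contradiction from $W''>0$, because $a(z)-\phi_c(z)-\frac{c^2}{4}<\alpha-\frac{c^2}{4}\le 0$. One harmless slip: the inequality $2\sqrt{\alpha}>c/2$ you invoke fails for large $c$, but you do not need it, since some $\mu\in(c/2,c)$ exists for every $c>0$.
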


\begin{proof}
By contradiction, suppose that $c\geq2\sqrt{\alpha}$, then by Lemma \ref{decavr} $\phi_c(z)$ satisfies
$$
\phi_c(z)=o\big{(}{\rm e}^{(-c+\epsilon)z}\big{)},~{\rm as}~z\to+\infty,~~0<\epsilon\ll1.
$$
Set $W(z)={\rm e}^{\frac{c}{2}z}\phi_c(z)$, then $W(z)$ satisfies
\begin{equation}\label{eigretrans12}
\begin{cases}
&W''(z)+\Big{[}a(z)-\phi_c(z)-\frac{c^2}{4}\Big{]}W(z)=0,~~\forall z\in\mathbb{R},\\
&W(\pm\infty)=0.
\end{cases}
\end{equation}
Note that $a(z)-\phi_c(z)-\frac{c^2}{4}<\alpha-\frac{c^2}{4}\leq0$ for any $z\in\mathbb{R}$ if $c\geq2\sqrt{\alpha}$,
then $W''(z)>0$ on $z\in\mathbb{R}$, which contradicts to $W(z)>0$ on $z\in\mathbb{R}$ and $W(\pm\infty)=0$. \end{proof}

\begin{lemma}\label{extence1}
For any given $c\in(0,~2\sqrt{\alpha})$, (\ref{nonle4}) and \eqref{nokm} admit
at most one forced wave solution decaying exponentially, i.e., $\phi_c(z)=O\big{(}{\rm e}^{-\sigma z}\big{)}$
as~$z\rightarrow+\infty$ for some $\sigma>0$. \end{lemma}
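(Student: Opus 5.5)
We will prove uniqueness by mimicking the scaling/sliding argument of Lemma \ref{extence31}, with the exponential decay estimate of Lemma \ref{decavr} in place of \eqref{obac}. Suppose $\varphi$ and $\psi$ are two distinct forced waves of (\ref{nonle4})--(\ref{nokm}), both decaying exponentially at $z=+\infty$. By the ordering of forced waves (Theorem A) we may assume $\varphi(z)<\psi(z)$ for all $z\in\mathbb{R}$; otherwise they coincide or cross, which the ordering forbids.

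The first step is to compare the tails. By Lemma \ref{decavr}, every exponentially decaying local positive solution satisfies (\ref{xcr}). Hence there are constants $K_\varphi,K_\psi>0$ with
\[
\varphi(z)\sim K_\varphi\exp\Big(\int_{z_0}^z\sigma_1(s){\rm d}s\Big),\qquad \psi(z)\sim K_\psi\exp\Big(\int_{z_0}^z\sigma_1(s){\rm d}s\Big),
\]
so $\psi/\varphi\to K_\psi/K_\varphi\ge 1$ as $z\to+\infty$. At $z=-\infty$ both tend to $\alpha$, so $\psi/\varphi\to1$. Together with positivity and continuity, this makes $\psi/\varphi$ bounded on $\mathbb{R}$. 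The set $K=\{k>0:\ k\varphi\ge\psi \text{ on }\mathbb{R}\}$ is therefore nonempty, and $k_0=\inf K\ge1$. If $k_0=1$ then $\varphi\ge\psi$, a contradiction, so $k_0>1$.

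Next set $w=k_0\varphi-\psi\ge0$ with $\inf w=0$. Note that $w(-\infty)=(k_0-1)\alpha>0$. We must rule out that the infimum is attained only asymptotically at $+\infty$; this is the main obstacle, and it is where the proof departs from Lemma \ref{extence31}. Here the ratio $K_\psi/K_\varphi$ need not equal $1$. Two cases arise.

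\emph{Case $k_0>K_\psi/K_\varphi$.} Then $w>0$ for $z\gg1$, so the infimum $0$ is attained at some finite $\bar z$.

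\emph{Case $k_0=K_\psi/K_\varphi$.} We cannot directly conclude. The plan is to work with the quotient $\rho=\psi/\varphi$. It satisfies
\[
\rho''+\Big(c+\frac{2\varphi'}{\varphi}\Big)\rho'+\varphi(1-\rho)\rho=0.
\]
Since $\rho>1$ we have $\varphi(1-\rho)\rho<0$, so $\rho$ admits no interior local maximum. Because $\rho(-\infty)=1$ and $\rho(+\infty)=k_0=\sup\rho$, $\rho$ must then be nondecreasing. To exclude $\sup\rho$ being reached only at $+\infty$, we rewrite the equation as
\[
(\varphi^2e^{cz}\rho')'=-\varphi^3e^{cz}\rho(1-\rho)>0,
\]
which shows that $\varphi^2e^{cz}\rho'$ is increasing. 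From (\ref{xcr}), $\varphi^2e^{cz}\sim K_\varphi^2\exp\big(\int(2\sigma_1+c)\big)\to0$ exponentially, since $2\sigma_1+c\to-c$. We would then integrate the identity from $z$ to $+\infty$ (or from $-\infty$ to $z$). The right-hand side is positive and integrable, and this should force $\rho'$ to have a sign incompatible with $\rho\uparrow k_0$, or force $\rho(-\infty)\ne1$. Concretely, $\varphi^2e^{cz}\rho'\to0$ at $-\infty$ as well, because $\rho'\to0$ and $e^{cz}\to0$. Integrating over all of $\mathbb{R}$ gives $0=\lim_{+\infty}-\lim_{-\infty}=\int\varphi^3e^{cz}\rho(\rho-1)>0$. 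This is a contradiction, and it in fact handles both cases at once. I expect to verify the boundary limits via the asymptotics $\rho'\to0$ from (\ref{xcr}) and standard derivative estimates, which are the delicate point.

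Finally, in the first case we apply the strong maximum principle exactly as in (\ref{m}). We have $w''+cw'\le m(z)w$ with bounded $m$, and $w$ attains an interior minimum $0$, so $w\equiv0$, contradicting $w(-\infty)>0$. Hence the exponentially decaying forced wave is unique.
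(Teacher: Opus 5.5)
Your argument is correct, and its decisive step is the paper's proof written in different variables. With $W_1=e^{cz/2}\varphi$ and $W_2=e^{cz/2}\psi$ one has $W_1'W_2-W_2'W_1=-e^{cz}\varphi^2\rho'$, so your identity $(\varphi^2e^{cz}\rho')'=\varphi^3e^{cz}\rho(\rho-1)$ is exactly the paper's Wronskian identity, integrated over $\mathbb{R}$. The boundary terms vanish because $\varphi,\psi$ and their derivatives are bounded at $-\infty$ and are $O(e^{-(c-\epsilon)z})$ at $+\infty$.

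As you noticed yourself, this single integration already gives the contradiction. The sliding set-up with $k_0$, the case split, and the strong maximum principle step are therefore superfluous.
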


\begin{proof}
Lemma \ref{decavr} guarantees that
\begin{equation}\label{plg}
\phi_c(z)=o\big{(}{\rm e}^{-(c-\epsilon) z}\big{)},~{\rm as}~z\to+\infty,\;\;0<\epsilon\ll 1.
\end{equation}
Set $W(z)={\rm e}^{\frac{c}{2}z}\phi_c(z)$,  it is easy to see that $W(z)$ satisfies
\begin{equation}\label{Wrok}
\begin{cases}
&W''(z)+\Big{[}a(z)-{\rm e}^{\frac{-c}{2}z}W(z)-\frac{c^2}{4}\Big{]}W(z)=0,~~ z\in\mathbb{R},\\
&W(\pm\infty)=0,~~W'(\pm\infty)=0.
\end{cases}
\end{equation}
Suppose that $\phi^i_c(z)(i=1,2)$ are two forced wave solutions of (\ref{nonle4}) and \eqref{nokm} satisfying (\ref{plg}), set $W_i(z)={\rm e}^{\frac{c}{2}z}\phi^i_c(z)(i=1,2)$ for $z\in\mathbb{R}$,
using the ordering of wave solutions,
we may assume $W_1(z)<W_2(z)$. By (\ref{Wrok}), it is easy to verify that
\begin{equation}\label{Wronskian}
\begin{aligned}
\big{(}W'_1(z)W_2(z)-W'_2(z)W_1(z)\big{)}'&=W_1''(z)W_2(z)-W_1(z)W''_2(z)\\
&={\rm e}^{-\frac{c}{2}z}W_1(z)W_2(z)(W_1(z)-W_2(z)).
\end{aligned}
\end{equation}
Integrating (\ref{Wronskian}) from $-\infty$ to $+\infty$, and using the fact that $W_i(\pm\infty)=0=W'_i(\pm\infty)(i=1,2)$,
we have
\begin{equation*}
\begin{aligned}
0
=\int_{-\infty}^{+\infty}{\rm e}^{-\frac{c}{2}z}W_1(z)W_2(z)(W_1(z)-W_2(z)){\rm d}z<0,
\end{aligned}
\end{equation*}
which is a contradiction. This completes the proof.
\end{proof}

In the following, we investigate the existence of forced waves to (\ref{nonle4}) and \eqref{nokm} for any  $a(z)$ satisfying ({\bf $H_1$})
and one can see that
the existence of multi-type wave solutions $\phi_c(z)$ to (\ref{nonle4})-\eqref{nokm} and the construction of super-/sub-solution highly depends on the decay rate of $a(z)$ as $z\rightarrow+\infty$.

{\bf Proof of Theorem \ref{thmfirst1}}

To prove the statement (I), we shall
apply sub-/super- solution method  to prove the existence of  a forced
wave $\phi_c(z)$ of  (\ref{nonle4})-\eqref{nokm} decaying exponentially  at $z=+\infty$.

We adopt some basic ideas in \cite{[BDD]} to construct a weak nonnegative  sub-solution of (\ref{nonle4}).
For each fixed $c\in (0,2\sqrt{\alpha})$, the limit $\lim\limits_{z\to -\infty}a(z)=\alpha$ guarantees that there exist positive constants $a_0>0$ and large enough $L\gg 1$ such that
$$
a(z)-\frac{c^2}{4}\ge 2a_0>0,~~{\rm for}\;\;z\leq -\frac{L}{2};\;\;{\rm and}\;\;\frac{\pi^2}{L^2}\le a_0,
$$
 thus
\begin{equation}\label{4.1a}
a(z)\ge \frac{c^2}{4}+ a_0+\frac{\pi^2}{L^2},~~\;\;z\leq -\frac{L}{2}.
\end{equation}
Define
\begin{equation}\label{4.1b}
\underline{u}(z)=
\left\{\begin{aligned}
&\delta {\rm e}^{-\frac{c}{2}z}W_L(z),
~~~~-\frac{3L}{2}<z<-\frac{L}{2},\\
&~0,~~~~~~~~~~~~~~~~~~~|z+L|\geq \frac{L}{2},
\end{aligned}
\right.
\end{equation}
with $W_L(z)=\cos\big(\frac{z\pi}{L}+\pi\big)$. Note that $W''_L(z)+\frac{\pi^2}{L^2}W_L(z)\equiv 0$.

 By virtue of  \eqref{4.1a} and choosing $\delta>0$ small enough such that $\delta\le a_0{\rm e}^{-\frac{3cL}{4}}$, then for  $-\frac{3L}{2}<z<-\frac{L}{2}$, it is easy to check that
\begin{equation*}
\begin{aligned}
&\underline{u}''(z)+c\underline{u}'(z)+a(z)\underline{u}(z)-\underline{u}^2(z)\\
=&\delta{\rm e}^{-\frac{c}{2}z}\left( W''_L(z)+\Big( a(z)-\frac{c^2}{4}\Big)W_L(z)-\delta {\rm e}^{-\frac{c}{2}z}W_L(z)\right)\\
\geq& \delta {\rm e}^{-\frac{c}{2}z} \left(W''_L(z)+\frac{\pi^2}{L^2}W_L(z)+\left(a_0-\delta{\rm e}^{-\frac{c}{2}z}W_L(z)\right)W_L(z)\right)\ge 0.
\end{aligned}
\end{equation*}
Obviously zero is sub-solution of (\ref{nonle4}), thus $\underline{u}(z)$ defined in \eqref{4.1b} is a nonnegative  sub-solution of (\ref{nonle4}).

Note that for any given $\epsilon>0$,  there exists $\overline{z}_\epsilon>0$ and large $k>0$
such that
$a(z)\leq\epsilon(c-\epsilon)$, $k{\rm e}^{-(c-\epsilon)\overline{z}_\epsilon}=\alpha$ and $k{\rm e}^{-(c-\epsilon)z}<\alpha$ for $z>\overline{z}_\epsilon$,
where $a(z)\leq\alpha,~\forall z\in\mathbb{R}$.

Let $\bar{u}_1(z)=k{\rm e}^{-(c-\epsilon)z}$, then
\begin{equation*}
\begin{aligned}
&\bar{u}_1''(z)+c\bar{u}_1'(z)+a(z)\bar{u}_1(z)-\bar{u}_1^2(z)\\
=&\bar{u}_1(z)\big{(}-\epsilon(c-\epsilon)+a(z)-\bar{u}_1\big{)}\leq 0,~~z>\overline{z}_\epsilon,
\end{aligned}
\end{equation*}
obviously $\overline{u}(z):=\min\{\alpha,~k{\rm e}^{-(c-\epsilon)z}\}$
is also a super-solution of (\ref{nonle4}) for large enough $k>0$, and $\underline{u}(z)<\overline{u}(z)$~for $z\in\mathbb{R}$.

Applying sub/super solution method, the equation \eqref{nonle4} admits a global positive solution $\phi_c(z)$ satisfying $\underline{u}(z)\leq\phi_c(z)\leq \overline{u}(z)$.
By Lemma \ref{negnega1} and  the fact that $\phi_c(z)\leq \overline{u}(z)$, $\phi_c(z)$ must satisfy the asymptotic boundary condition \eqref{nokm}, i.e. $\phi_c(-\infty)=\alpha$ and $\phi_c(+\infty)=0$. Further by Lemma \ref{decavr} guarantees both the exponential decay of $\phi_c(z)$ as $z\to+\infty$ and the asymptotic behavior
\begin{equation*}
\phi_c(z)\sim A_1\widetilde{\sigma}_1(z),\quad z\to+\infty,
\end{equation*}
for some $A_1>0$. The uniqueness of such forced wave follows from  Lemma \ref{extence1}, which completes the proof of Theorem \ref{thmfirst1} (i).

The statement (II) for $c\geq 2\sqrt{\alpha}$ follows directly from  Proposition \ref{ex}.

Under the assumption of \eqref{214}, the statement (III) follows from Theorem \ref{deca}. This completes the proof of Theorem \ref{thmfirst1}.

{\bf Proof of Theorem \ref{thmfirspk}}
Under the assumption (\ref{plh}),
we shall show that
the problem (\ref{nonle4}) has infinitely many and ordered forced waves $\phi_c(z)$ decaying non-exponentially at $z=+\infty$.

Let $b(z)=\int_z^{+\infty}\widetilde{\sigma}_2(s){\rm d}s$
 and note that
\begin{equation}\label{btv}b'(z)=-\widetilde{\sigma}_2(z)~{\rm and}~b''(z)=-\sigma_2(z)\widetilde{\sigma}_2(z).\end{equation}
For any fixed $A>0$, choosing positive constants $M$ and $z_M$ large enough such that
$$M>\frac{2A}{c},~~Mb(z_M)=1~~{\rm and}~~\sigma_2(z)>-\frac{c}{6},~z>z_M,$$
we define
\begin{equation}\label{subsolution1}
\underline{u}(z)=
\left\{\begin{aligned}
&A\widetilde{\sigma}_2(z)(1-Mb(z)),
~~~~~~z>z_M ,\\
&0,~~~~~~~~~~~~~~~~~~~~~~~~~~~~~z\leq z_M,
\end{aligned}
\right.
\end{equation}

By simple computation
\begin{equation*}
\begin{aligned}
\underline{u}'(z) &= A \widetilde{\sigma}_2(z) \left[ \sigma_2(z)(1-M b(z)) + M \widetilde{\sigma}_2(z) \right], \quad z > z_M,\\
\underline{u}''(z) &= A \widetilde{\sigma}_2(z) \left[ (\sigma_2^2(z) + \sigma_2'(z))(1 - M b(z)) + 3M \sigma_2(z) \widetilde{\sigma}_2(z) \right], \quad z > z_M.
\end{aligned}
\end{equation*}
which with (\ref{btv}) yields that
\begin{equation*}
\begin{aligned}
&\underline{u}''(z) + \underline{u}'(z) + \underline{u}(z)(a(z) - \underline{u}(z)) \\
&= A \widetilde{\sigma}_2(z) \left[ \big(\sigma_2^2(z) + c \sigma_2(z) + a(z)\big)(1 - M b(z)) \right]
+ A \widetilde{\sigma}_2(z) \sigma_2'(z) (1 - M b(z)) \\
&\quad + A \left[ \widetilde{\sigma}_2(z) \right]^2 \left[ M(c + 3 \sigma_2(z)) - A (1 - M b(z))^2 \right] > 0,\quad z > z_M;
\end{aligned}
\end{equation*}
noting that
$
\sigma_2^2(z) + c \sigma_2(z) + a(z) \equiv 0$ for $z \geq z_M,
$
and
\begin{equation*}
\sigma_2'(z) = \frac{-2a'(z)}{\sqrt{c^2 - 4a(z)}} > 0,~~
M(c + 3 \sigma_2(z)) \geq \frac{c}{2} M>A \geq A (1 - M b(z))^2,\quad z > z_M.
\end{equation*}
Thus, $\underline{u}(z)$ is a non-trivial sub-solution of  (\ref{nonle4}) decaying  non-exponentially and decaying like  $A \widetilde{\sigma}_2(z)$ as $z \to +\infty$. Obviously  $\overline{u}(z)\equiv \alpha$  is a super solution of (\ref{nonle4}), which with $\underline{u}(z)$ guarantees the existence of  a global positive and bounded classical solution $\phi_c(z)$ of (\ref{nonle4}) satisfying $\underline{u}(z)\leq\phi_c(z)\leq\overline{u}(z)$.
Lemma \ref{negnega1}guarantees that $\phi_c(z)$ must be a forced wave  satisfying  (\ref{nonle4}) and (\ref{nokm}), which proves that there exists at least one
forced wave decaying non-exponentially at $z=+\infty$.

Further by  Theorem \ref{thmfirst1}(I) and Theorem A it follows that for $c\in(0,2\sqrt{\alpha})$ except the minimal forced wave decaying exponentially at $z=+\infty$ there exist
infinitely many forced waves $\phi_c(z)$ of (\ref{nonle4}) and (\ref{nokm}), which  decay non-exponentially at $z=+\infty$.
 While for $c\geq 2\sqrt{\alpha}$, using the fact that $\lambda_1\ge -\alpha+\frac{c^2}{4}\ge 0$ by virtue of  $a(z)\in (0,\alpha]$, then by Theorem A (ii) and Theorem \ref{thmfirst1} (II) it follows that (\ref{nonle4}) and (\ref{nokm}) have neither the minimal forced wave nor a wave decaying exponentially at $z=+\infty$, the above facts imply that (\ref{nonle4}) and (\ref{nokm}) admit  infinitely many forced waves and all the
waves decaying non-exponentially at $z=+\infty$.

 For any fixed $c>0$ let $\bar{u}(t,z):=u(t,z;\overline{u}_0)$ be the unique solution to the Cauchy problem of the  parabolic equation $u_t=u_{zz}+cu_z+u(a(z)-u)$ with $\overline{u}_0(z)=\alpha=\overline{u}(z)$.
By applying the standard comparison argument, it is easy to prove that $\bar{u}(t,z)$ is decreasing in $t$ for any $t>0$ and $\bar{u}(t,z)\ge \underline{u}(z)$ for any $t>0$ and $z\in \mathbb{R}$, thus $\lim\limits_{t\to +\infty}\bar{u}(t,z)=\phi^m(z)$ exists, which  is a forced wave solution of (\ref{nonle4})-(\ref{nokm}) by Lemma \ref{negnega1}, and $\phi^m(z)$ must be  the maximal forced wave $\phi_c^{\max}(z)$ if there exist other forced waves. This also proves that under the assumption of (\ref{plh}) the maximal forced wave $\phi_c^{\max}(z)$ exists and decay non-exponentially at
$z=+\infty$.

Next, we show that the maximal forced wave  $\phi^{\max}_c(z)$ is not in $L^1([z_0,+\infty))$, by contradictory argument, assume that $\phi^{\max}_c(z)\in L^1([z_0,+\infty))$, then it holds that there exists a constant $A_0>0$ such that $\phi^{\max}_c(z)\sim A_0\widetilde{\sigma}_2(z)$ as $z\to +\infty$. However the sub-solution  $\underline{u}(z)$ defined in \eqref{subsolution1}  with  $A>A_0$  implies that there exists at least one forced wave which decays more slowly than $\phi^{\max}_c(z)$ as $z\to +\infty$, which  is  a contradiction.
 Theorem \ref{damvt2} and Lemma \ref{extence31} further guarantees that  under the assumption of  (\ref{plh})   the maximal forced wave $\phi^{\max}_c(z)$ is also the unique forced wave not belonging in $L^1([z_0,+\infty))$ and must  satisfy the following  decaying estimate
\begin{equation*}\label{K}
\phi^{\max}_c(z)\sim \frac{c\widetilde{\sigma}_2(z)}{\int_{z}^{+\infty}\widetilde{\sigma}_2(\tau){\rm d}\tau},~as~z\to+\infty.
\end{equation*}
Based on the above facts  and by virtue of Lemma \ref{damvt2} it follows that  (\ref{nonle4}) and (\ref{nokm})  admit infinitely many ordered forced waves $\phi_c(z)$ decaying non-exponentially at $z=+\infty$ and $\phi_c(z) \sim K\widetilde{\sigma}_2(z)$ as $z\to +\infty$ for some constant $K>0$. This completes the proof of Theorem \ref{thmfirspk}

\section*{Appendix }

{\bf Proof of Lemma \ref{negnega1}.}\\
Let $\phi_c(z)$ be any given global positive solution of (\ref{questt1}), by applying the same argument as in the proof  \cite[Lemma A.1]{[CTW]}  it can be proved that $\phi_c(z)$ must be uniformly bounded in $\mathbb{R}$ and  $\sup\limits_{z\in \mathbb{R}}\phi_c(z)\le \sup\limits_{z\in \mathbb{R}} \{a(z)\}=\alpha$. The proof of  $\phi_c(-\infty)=\alpha>0$ can be found in
\cite[Lemma 3.1]{[BF]}. \\
We first show that $\phi'_c(z)$ has no  local positive maximum in $(z_0,+\infty)$.  By contradiction, we assume that
there exists some point $z_1>z_0$
such that   $\phi'_c(z_1)$ is a local positive maximum of $\phi'_c(z)$ for $z_1>z_0$, then $\phi''_c(z_1)=0$ and $\phi'''_c(z_1)\leq0$. Note that
\begin{equation*}
c\phi'_c(z_1)=[\phi_c(z_1)-a(z_1)]\phi_c(z_1)>0,
\end{equation*} thus,
$a(z_1)<\phi_c(z_1)$.  Thanks to $a'(z)\leq 0$ for $z>z_0$, differentiating the equation (\ref{questt1}) for $\phi_c(z)$,  we can see
$$
\phi'''_c(z_1)=\phi'_c(z_1)\Big{(}2\phi_c(z_1)-a(z_1)\Big{)}-a'(z)\phi_c(z)\big{|}_{z=z_1}>0,
$$
it is a contradiction. The nonexistence of local positive maximum of $\phi'_c(z)$ for $z\gg 1$  further implies  that $\phi'_c(z)$ is not sign changing for $z\gg 1$. Thus $\phi'_c(z)\ge 0$ for $z\gg 1$ or $\phi'_c(z)\leq 0$ for $z\gg 1$, which with the boundedness of $\phi_c(z)$ assures that
 $\lim\limits_{z\to+\infty}\phi_c(z)$ exists and $\lim\limits_{z\to+\infty}\phi'_c(z)=0$.  Now we can  prove that  $\lim\limits_{z\to+\infty}\phi_c(z)=0$, otherwise assume that $\lim\limits_{z\to+\infty}\phi_c(z)=\delta_0>0$, which with  (\ref{questt1}) and $\phi'_c(+\infty)=a(+\infty)=0$ implies that $\phi^{\prime\prime}_c(z)\ge  \delta_1>0 $ for $z\gg 1$, which contradicts  with  $\lim\limits_{z\to+\infty}\phi'_c(z)=0$. Thus $\phi_c(+\infty)=0$,  which further implies  $\phi'_c(z)\leq 0$ for $z\gg 1$.
It remains to prove that $\phi'_c(z)<0$ for $z\gg1$.  To see this, differentiating the equation (\ref{questt1}) with respect to $z$,
we have
$$\phi'''_c(z)+c\phi''_c(z)+\big{(}a(z)-2\phi_c(z)\big{)}\phi'_c(z)=-a'(z)\phi_c(z)\geq0,~~z\gg 1. $$
Applying the strong maximum principle, one has $\phi'_c(z)<0$ for  $z\gg 1$, which completes the proof.\\

{\bf  Proof of Proposition \ref{lepro}.}\\
Let $\psi(z)$ be a local positive solution of (\ref{questt1}) define on $[z_0,+\infty)$ for some large $z_0\gg 1$ and assume that $\psi(z)$ decays to zero non-exponentially at $z=+\infty$, then
 the statement of $\psi'(z)<0$ for $z\gg 1$ follows directly from the  proof of Lemma \ref{negnega1}. Denote $\theta(z)=\frac{\psi'(z)}{\psi(z)}$ and $r(z)=\frac{\psi''(z)}{\psi'(z)}$ for $z\ge z_0$;  if $r(+\infty)=0$ holds true,  then the nonlinear differential equation (\ref{questt1}) can be rewritten as
$$
\psi'(z)=\frac{\psi(z)(\psi(z)-a(z))}{c+r(z)},~~z\geq z_0,
$$
which with the assumption $r(+\infty)=0$ and the fact that $\psi'(z)<0,\;\psi(z)>0$ for $z\geq z_0\gg1$ implies that
$\psi(z)<a(z)$ for $z\geq z_0\gg1$.

It remains to prove $\theta(+\infty)=r(+\infty)=0$, the idea of the proof is similar to  {\cite[Lemma 3.3]{[BO]}}.  In the following we just give the proof of   $\theta(+\infty)=0$, for the claim    $r(+\infty)=0$ can be proved by the similar argument.

Multiplying (\ref{questt1}) by $\frac{1}{\psi(z)}$ and using the fact that
$\psi''/\psi=\theta'+\theta^2$, we have
\begin{equation}\label{cpm}\theta^2(z)+\theta'(z)+c\theta(z)=\psi(z)-a(z)\to 0,~{\rm as}~z\to+\infty.\end{equation}
We claim that $\theta(z)$ is bounded for  $z\gg 1$. If not, suppose on the contrary that $\theta(z)$ is unbounded near $z=+\infty$,
then $\theta(z)$ is either oscillating or monotone near $z=+\infty$.
In the case when $\theta(z)$ is oscillating near $z=+\infty$, then there exists a sequence $\{z_j\}_{j\in\mathbb{N}}$ such that
$|\theta(z_j)|\rightarrow+\infty$~and~$\theta'(z_j)=0$~with~$j\rightarrow+\infty$, which implies
\begin{equation*}\phi_c(z_j)-a(z_j)=\theta^2(z_j)+\theta'(z_j)+c\theta(z)\to+\infty,~{\rm as}~j\to+\infty;\end{equation*}
it is a contradiction.

In the case when $\theta(z)<0$ is monotone and unbounded for $z\gg 1$,
then $\lim\limits_{z\to+\infty}\frac{1}{\theta(z)}=0$, further by (\ref{cpm}) we have
$$
\frac{d}{dz}\big{(}\frac{1}{\theta(z)}\big{)}=\frac{-\theta'(z)}{\theta^2(z)}\rightarrow 1,\; {\rm as}\;\; z\to+\infty;
$$
which contradicts to the fact that $\lim\limits_{z\to+\infty}\frac{1}{\theta(z)}=0$. Thus, $\theta(z)$ must be  bounded for $z\gg 1$.
Based on the boundedness of $\theta(z)$ for $z\gg 1$, suppose that there exist $A$ and $B$ such that
$$
A:=\liminf_{z\to+\infty}\theta(z)\leq\limsup_{z\to+\infty}\theta(z)=:B.
$$
By contradiction, assume that $A\neq B$, then there exist two sequences $\{x_m\}_{m\in\mathbb{N}}$ and $\{y_m\}_{m\in\mathbb{N}}$ satisfying
$\lim\limits_{m\to+\infty}x_m=\lim\limits_{m\to+\infty}y_m=+\infty$
and
$$
\lim_{m\to+\infty}\theta(x_m)=A,~~\lim_{m\to+\infty}\theta(y_m)=B~~{\rm and}~~\lim_{m\to+\infty}\theta'(x_m)=\lim_{m\to+\infty}\theta'(y_m)=0;
$$
substituting them into (\ref{cpm}), we have $A=-c$ and $B=0$.  Further, choose some sequence $\{z_j\}_{j\in\mathbb{N}}$
with $\lim\limits_{j\to+\infty}z_j=+\infty$ such that
$$
\theta(z_j)=\lambda\in(-c,0)~~{\rm and}~~\theta'(z_j)<0,
$$
then by (\ref{cpm}) we have
$$
\lim_{j\to+\infty}\big{\{}\theta^2(z_j)+\theta'(z_j)+c\theta(z_j)\big{\}}\leq\lambda^2+c\lambda<0,
$$
which is impossible. Thus $\lim\limits_{z\to+\infty}\theta(z)=A=B$ and $A,B$ are real roots of $\mu^2+c\mu=0$.
Note that the local positive solution  $\psi(z)$ decays to zero non-exponentially as $z\to+\infty$, thus
$$\lim_{z\to+\infty}\theta(z)=B=\lim_{z\to+\infty}\frac{\psi'(z)}{\psi(z)}=0. $$
The proof is completed.

\section*{Acknowledgment}
The authors are very grateful to the anonymous referees for many valuable and important suggestions which helped to improve the exposition of the manuscript. We would also like to thank Professor Jian Fang for his useful suggestions and valuable discussions.
Yaping Wu and Zhibao Tang are supported by the NSF of China (No. 12371209 and No. 11871048) and Beijing NSF (No. 1232004).
Shi-Liang Wu is supported by the
NSF of China (No. 12171381), the Shaanxi Fundamental Science Research Project for Mathematics and Physics (No. 22JSY029),   and Natural Science Basic Research Program of Shaanxi (No. 2024RS-CXTD-88).

\end{document}